\newtheorem{theorem}{Theorem}[section]
\theoremstyle{definition}
\newtheorem{definition}[theorem]{Definition}
\newtheorem{example}{Example}
\newcommand{\subfiguretitle}[1]{\scriptsize{#1} \\ \vspace*{1mm}}
\newcommand{\mf}[1]{{\mathfrak{#1}}}
\newcommand{\pd}[2]{\frac{\partial#1}{\partial#2}}
\newcommand{\twovec}[2]{\begin{bmatrix}{#1}\\{#2}\end{bmatrix}}
\newcommand{\pre}[1]{\operatorname{\bullet}#1}
\newcommand{\post}[1]{#1\operatorname{\bullet}}
\newcommand{\indices}[1]{\left<#1\right>}
\providecommand{\coloneqq}{\mathrel{\mathop:}=}
\providecommand{\abs}[1]{\lvert#1\rvert}
\title[Signal-Flow Based Runge--Kutta Methods]
      {Signal-Flow Based Runge--Kutta Methods for the Simulation of Complex Networks}
\author[Stefan Klus]{}
\email{sklus@upb.de}
\subjclass{Primary: 65L05, 65L06; Secondary: 94C15.}
\keywords{Runge--Kutta methods, complex networks, integrated circuits, latency.}
\begin{document}
\maketitle

\centerline{\scshape Stefan Klus }
\medskip
{\footnotesize
 \centerline{Institute for Industrial Mathematics,}
 \centerline{University of Paderborn,}
 \centerline{33095 Paderborn, Germany}
}

\bigskip

\centerline{(Communicated by the associate editor name)}

\begin{abstract}
Complex dynamical networks appear in a wide range of physical, biological, and engineering systems. The coupling of subsystems with varying time scales often results in multirate behavior. During the simulation of highly integrated circuits, for example, only a few elements underlie changing signals whereas the major part---usually up to 80 or even 90 per cent---remains latent. Standard integration schemes discretize the entire circuit with a single step size which is mainly limited by the accuracy requirements of the rapidly changing subcircuits \cite{GFtM05}. It is of a particular interest to speed up the simulation without a significant loss of accuracy. By exploiting the latency of the system, only a fraction of the equations has to be formulated and solved at a given time point.

G\"unther and Rentrop \cite{GR94} suggest that multirate strategies must be based both on the numerical information of the integration scheme and on the topology of the circuit. In this paper, we will introduce a directed graph describing the interdependency of the underlying system and propose Runge--Kutta methods which utilize the signal flow of the system in order to identify and exploit inactive regions. Furthermore, we describe an extension of these methods to identify and exploit periodic subsystems.
\end{abstract}

\section{Introduction}

In this paper, we will consider initial value problems
\begin{equation} \label{eq:ODE}
    \begin{split}
        \dot{x}(t) &= f(t, x(t)), \\
        x(t_0) &= x_0,
    \end{split}
\end{equation}
with $ t \in \mathbb{I} \subseteq \mathbb{R} $ and $ f : \mathbb{I} \times \mathbb{D} \rightarrow \mathbb{R}^n $, $ \mathbb{D} \subseteq \mathbb{R}^n $. A fundamental class of numerical solvers are one-step methods of the form
\begin{equation}
    x^{m+1} = x^m + h \, \Phi(t^m, x^m, h),
\end{equation}
where $ \Phi $ is referred to as the \emph{increment function}. Important examples of one-step methods are Runge--Kutta methods. The increment function of a general $ s $-stage Runge--Kutta method is given by
\begin{subequations} \label{eq:RKODE}
\begin{equation}
    \Phi(t^m, x^m, h) = \sum_{q=1}^s b_q k_q,
\end{equation}
where
\begin{equation}
    k_q = f\big(t^m + c_q h, x^m + h \sum_{r=1}^s a_{qr} k_r\big).
\end{equation}
\end{subequations}
The coefficients $ a_{qr} $, $ b_q $, and $ c_q $ are often arranged in form of the so-called \emph{Butcher tableau}
\begin{equation}
    \begin{array}{c|c}
        c & A  \\ \hline \\[-1.8ex] 
          & b^T
    \end{array}
    \qquad
    \coloneqq
    \qquad
    \begin{array}{c|cccc}
        c_1    & a_{11} & a_{12} & \dots  & a_{1s} \\
        c_2    & a_{21} & a_{22} & \dots  & a_{2s} \\
        \vdots & \vdots & \vdots & \ddots & \vdots \\
        c_s    & a_{s1} & a_{s2} & \dots & a_{ss} \\ \hline
               & b_1    & b_2    & \dots & b_s
    \end{array}.
\end{equation}
If the matrix $ A $ is strictly lower triangular, then the Runge--Kutta method is called explicit. Otherwise, the method is said to be implicit.

\section{Time-driven ordinary differential equations}

Without loss of generality, the ordinary differential equation \eqref{eq:ODE} can be rewritten as
\begin{equation}
    \twovec{x_E}{\dot{x}_I} = \twovec{f_E(t)}{f_I(x_E, x_I)},
\end{equation}
with external variables $ x_E \in \mathbb{R}^{n_E} $ and internal variables $ x_I \in \mathbb{R}^{n_I} $. That is, we split the system into two subsystems and introduce additional variables which can be explicitly written as a function of the time $ t $. The dimension of the input vector $ x_E $ depends on the number of different time-dependent terms, the dimension of the internal vector $ x_I $ is equal to the number of equations of the original system. We introduce this partitioning to measure the influence of the input signals on the internal variables and to generate a model of the signal flow.

From now on, for the sake of simplicity, we will write the system---to which we will refer as a \emph{time-driven ordinary differential equation}---as
\begin{equation} \label{eq:TDODE}
    \twovec{x_E}{\dot{x}_I} = f(t, x), \text{ with } x = \twovec{x_E}{x_I} \text{ and } f = \twovec{f_E}{f_I}.
\end{equation}
Thus, $ x_{E, i} = x_i $ and $ x_{I, i} = x_{n_E + i} $. Let $ n = n_E + n_I $ denote the size of the whole system again.

For a time-driven ordinary differential equation, a one-step method is of the form
\begin{equation}
    \twovec{x_E^{m+1}}{x_I^{m+1}} = \twovec{x_E^m}{x_I^m} + \twovec{\Delta x_E^m}{\Delta x_I^m},
\end{equation}
with
\begin{equation} \label{eq:TDODE_update}
    \begin{split}
        \Delta x_E^m &= f_E(t^{m+1}) - f_E(t^m), \\
        \Delta x_I^m &= h \, \Phi(t^m, x^m, h).
    \end{split}
\end{equation}
The increment function of a Runge--Kutta method can now be rewritten as
\begin{subequations} \label{eq:RKTDODE}
\begin{equation}
    \Phi(t^m, x^m, h) = \sum_{q=1}^s b_q k_I^q,
\end{equation}
where
\begin{equation}
    \begin{split}
        k_E^q &= f_E(t^m + c_q h), \\
        k_I^q &= f_I\big(k_E^q, x_I^m + h \sum_{r=1}^s a_{qr} k_I^r\big).
    \end{split}
\end{equation}
\end{subequations}

\section{Dependency graph}
\label{sec:Dependency graph}

Given a time-driven ordinary differential equation, we want to analyze how changes of the input variables $ x_E $ affect the internal variables $ x_I $ and how the signals propagate through the system. To this end, we derive a directed graph which represents the structure of the system.

Define $ \indices{n} = \{ 1, \dots, n \} $ to be the set of indices. Since in general the functions $ f_i $, $ i \in \indices{n} $, do not depend on all variables $ x_j $, $ j \in \indices{n} $, we introduce input and output sets for each variable to describe the dependency on other variables.

\begin{definition}[Input and output sets]
Define the \emph{input set} of $ x_i $, $ i \in \indices{n} $, to be
\begin{equation}
    \pre{x_i} = \left\{ x_j \; \bigg| \; \pd{f_i}{x_j} \not\equiv 0, \, j \in \indices{n} \right \}.
\end{equation}
Analogously, define the \emph{output set} to be
\begin{equation}
    \post{x_i} = \left\{ x_j \; \bigg| \; \pd{f_j}{x_i} \not\equiv 0, \, j \in \indices{n} \right \}.
\end{equation}
\end{definition}

That is, the variable $ x_i $ depends on $ x_j $ if the value of $ x_j $ is required for the evaluation of $ f_i $. The input and output sets induce a directed graph with the vertices being the variables and the edges being the dependency relations between the variables.

\begin{definition}[Dependency graph]
For a given time-driven ordinary differential equation, define the \emph{dependency graph} by $ \mf{G}_d(f) = (\mf{V}_d, \mf{E}_d) $, with $ \mf{V}_d = \{ \mf{v}_1, \dots, \mf{v}_n \} $ and $ \mf{E}_d = \{ (\mf{v}_i, \mf{v}_j) \mid x_i \in \pre{x_j}, \; i, j \in \indices{n} \} $.
\end{definition}

If it is clear which differential equation is meant, we will simply write $ \mf{G}_d $. The dependency graph of large-scale dynamical networks can be very sparse since the subsystems are often strongly coupled inside but only connected to a few other subsystems of the network.

\begin{example} \hspace*{\fill}
\begin{enumerate}
\item Consider the linear differential equation
\begin{equation*}
    \ddddot{x}(t) = \dddot{x}(t) + \dot{x}(t),
\end{equation*}
which is equivalent to the first-order system
\begin{equation*}
    \begin{bmatrix}
        \dot{x}_1(t) \\
        \dot{x}_2(t) \\
        \dot{x}_3(t) \\
        \dot{x}_4(t)
    \end{bmatrix}
    =
    \underbrace{
    \begin{bmatrix}
        0 & 1 & 0 & 0 \\
        0 & 0 & 1 & 0 \\
        0 & 0 & 0 & 1 \\
        0 & 1 & 0 & 1
    \end{bmatrix}
    }_{\displaystyle A}
    \begin{bmatrix}
        x_1(t) \\
        x_2(t) \\
        x_3(t) \\
        x_4(t)
    \end{bmatrix}.
\end{equation*}
The input and output sets are
\begin{equation*}
    \begin{array}{ll}
        \pre{x_1} = \{ x_2 \},      & \post{x_1} = \varnothing, \\
        \pre{x_2} = \{ x_3 \},      & \post{x_2} = \{ x_1, x_4 \}, \\
        \pre{x_3} = \{ x_4 \},      & \post{x_3} = \{ x_2 \}, \\
        \pre{x_4} = \{ x_2, x_4 \}, & \post{x_4} = \{ x_3, x_4 \}.
    \end{array}
\end{equation*}

The differential equation is an equation of order three in $ \dot{x}(t) $. This can also be seen in the dependency graph, which is shown in Figure~\ref{fig:LinearSystemDependency}, since $ x_1 $ depends only on $ x_2 $ and can be obtained by integration. Moreover, the transposed system matrix $ A^T $ is the adjacency matrix of $ \mf{G}_d $, i.e.\ $ \mf{G}_d = \mf{G}(A^T) $.

\begin{figure}[htb]
    \centering
    \includegraphics[width=0.2\textwidth]{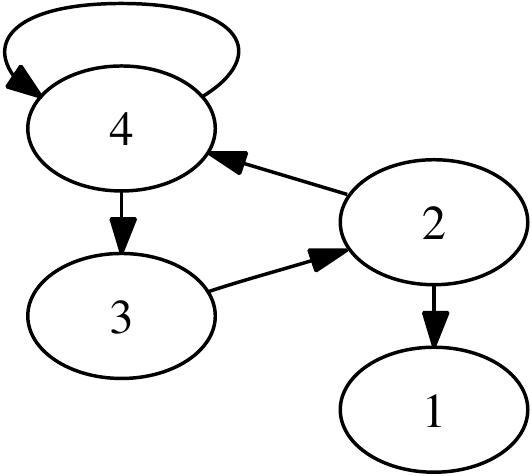}
    \caption{Dependency graph $ \mf{G}_d $ of the linear system.}
    \label{fig:LinearSystemDependency}
\end{figure}

\item Given the inverter chain of length $ N $ shown in Figure~\ref{fig:Inverterchain}, the corresponding circuit equations can be written as a time-driven ordinary differential equation with
\begin{equation*}
    f(t, v) =
    \left[
    \begin{array}{c}
        0 \\
        \mathrm{V_{dd}} \\
        V_s(t) \\ \hline
        g(v_1, v_2, v_3, v_4) \\
        g(v_1, v_2, v_4, v_5) \\
        \vdots \\
        g(v_1, v_2, v_{N+2}, v_{N+3})
    \end{array}
    \right].
\end{equation*}
Here, $ n_E = 3 $ and $ n_I = N $. The function $ g $ consists of the characteristic equations of the modules connected to the individual nodes and can be written as
\begin{equation*}
    g(v_1, v_2, v_{i-1}, v_i)
        = -\frac{1}{C_i}\big(\imath_{ds, n}(v_i, v_{i-1}, v_1) + \imath_{ds, p}(v_i, v_{i-1}, v_2)\big).
\end{equation*}
We use the Shichman--Hodges model \cite{SH68} to describe the drain-source current $ \imath_{ds} $ of the pMOS and nMOS transistors.

\begin{figure}[htb]
    \centering
    \includegraphics[width=0.7\textwidth]{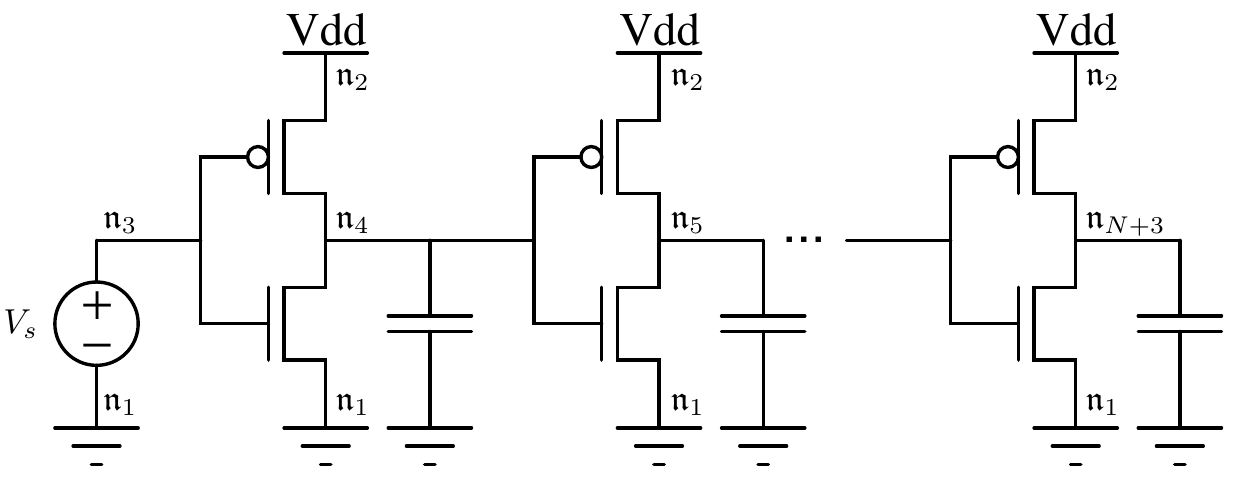}
    \caption{Inverter chain of length $ N $.}
    \label{fig:Inverterchain}
\end{figure}

Although the ground voltage and the positive supply voltage $ \mathrm{V_{dd}} $ are constant over time, we introduce additional variables since this assignment leads to a natural correlation between the nodes $ \mf{n}_i $ and the vertices $ \mf{v}_i $. In addition, it allows for a straightforward graph-based approach to generate the system of equations and the dependency graph. The Jacobian $ \pd{f}{v} $ exhibits the following structure
\begin{equation*}
    \pd{f}{v} =
    \left[
    \begin{array}{ccc|ccccc}
          &   &   &   &   &   &  &   \\
          &   &   &   &   &   &  &   \\
          &   &   &   &   &   &  &   \\ \hline
        * & * & * & * &   &   &  &   \\
        * & * &   & * & * &   &  &   \\
        * & * &   &   & * & * &  &   \\
        \vdots & \vdots &   &   &   & \ddots & \ddots & \\
        * & * &   &   &   &   & * & *
    \end{array}
    \right],
\end{equation*}
where empty places denote partial derivatives identical to zero. Figure~\ref{fig:InverterchainDependency} shows the dependency graph of the inverter chain. Since the constant voltages $ v_1 $ and $ v_2 $ have no influence on the dynamic signal flow, the corresponding vertices and associated edges have been omitted due to visualization reasons.

\begin{figure}[htb]
    \centering
    \includegraphics[width=0.5\textwidth]{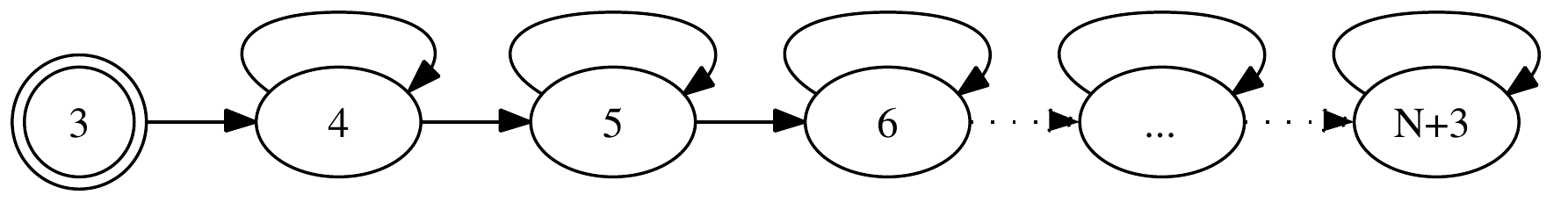}
    \caption{Dependency graph $ \mf{G}_d $ of the inverter chain.}
    \label{fig:InverterchainDependency}
\end{figure}
\end{enumerate}
\end{example}

In the following, we often identify $ x_i $ with $ \mf{v}_i $. Each internal vertex of the dependency graph represents a one-dimensional ordinary differential equation that is coupled to other one-dimensional systems. Generally speaking, a time-driven ordinary differential equation together with its dependency graph can be regarded as a coupled cell system~\cite{GPS04, GS03} with additional time-dependent inputs.

\section{Signal-flow based Runge--Kutta methods}

During the simulation of big and loosely coupled networks, different subsystems often exhibit different rates of activity. That is, the values in some parts of the network change rapidly, while in other parts the values change very slowly or do not change at all. The active regions usually vary over time so that a previously inactive region undergoes quick changes and vice versa.

Consider for example the inverter chain. If we apply an input signal, then, generally speaking, this input signal is reversed repeatedly with a small time delay so that it seems to flow continuously through the circuit. The step size control of standard integration schemes depends mainly on the fastest changing variables. As a result, even the inactive signals have to be recomputed at every time step unless multirate integration schemes or other techniques to exploit the latency are used. We will propose an integration scheme which utilizes the underlying structure of the system.

With the definitions in Section~\ref{sec:Dependency graph}, it is possible to determine which values of $ x^m $ are necessary to compute the new values of $ x^{m+1} $, namely, for the update of $ x_i^m $, all values of the variables of the input set $ \pre{x_i} $ are required. Since the external variables $ x_{E, i} $, $ i \in \indices{n_E} $, depend only on the time $ t $, the input sets are empty, i.e.\ $ \pre{x_{E, i}} = \varnothing $. The update of the internal values $ x_{I, i} $, $ i \in \indices{n_I} $, requires the evaluation of $ f_{I, i} $ and thus the values of $ \pre{x_{I, i}} $. To identify latent regions, we have to distinguish between the different vertex types.

\begin{definition}[Semi-latency]
Let $ t^m $ be the current time point and $ t^{m-1} $ the previous time point.
\begin{enumerate}
\item An external variable $ x_{E, i} $, $ i \in \indices{n_E} $, is said to be \emph{semi-latent} at $ t^m $ if
\begin{equation}
    f_{E, i}(t^m + c_q h) = f_{E, i}(t^{m-1} + c_q h)
\end{equation}
for all $ q = 1, \dots, s $.
\item An internal variable $ x_{I, i} $, $ i \in \indices{n_I} $, is defined to be \emph{semi-latent} if
\begin{equation}
    \Phi_i(t^{m-1}, x^{m-1}, h) = 0.
\end{equation}
\end{enumerate}
\end{definition}

The definition implies that $ x_{I, i}^m = x_{I, i}^{m-1} $ for all semi-latent internal variables. Whether a vertex is semi-latent at a specific time point is not known until all the values have been evaluated, but since our aim is to reduce the number of function evaluations, we want to mark vertices which need not be recomputed. Therefore, we introduce an additional concept.

\begin{definition}[Latency]
A variable $ x_i $, $ i \in \indices{n} $, is called \emph{latent of order} $ 1 $ if $ x_i $ and all variables of the set $ \pre{x_i} $ are semi-latent. Additionally, a latent variable $ x_i $ is defined to be \emph{latent of order} $ \nu $ if all variables in $ \pre{x_i} $ are at least latent of order $ \nu-1 $.
\end{definition}

Let $ \varepsilon $ be a user-defined error tolerance. For numerical computations, the semi-latency conditions are replaced by $ \abs{\Delta x_{E, i}^{m-1}} < \varepsilon $ and $ \abs{\Delta x_{I, i}^{m-1}} < \varepsilon $, respectively. In order to illustrate the different states of activity, we simulate the inverter chain.

\begin{example} \label{ex:InverterchainSimulation}
If the inverter chain is excited with a given input signal, then this signal flows---reversed at each inverter---through the circuit, as described above. Figure~\ref{fig:InverterchainSimulation} shows the voltages and activity states resulting when the circuit is excited with the displayed piecewise linear function. With a view to a better visualization, the respective activity states of the vertices are slightly shifted upward. Clearly, only a few vertices are active at each time point and these active regions flow through the dependency graph.

\begin{figure}[htbp]
    \begin{center}
        \subfiguretitle{a)}
        \includegraphics[width=0.9\textwidth]{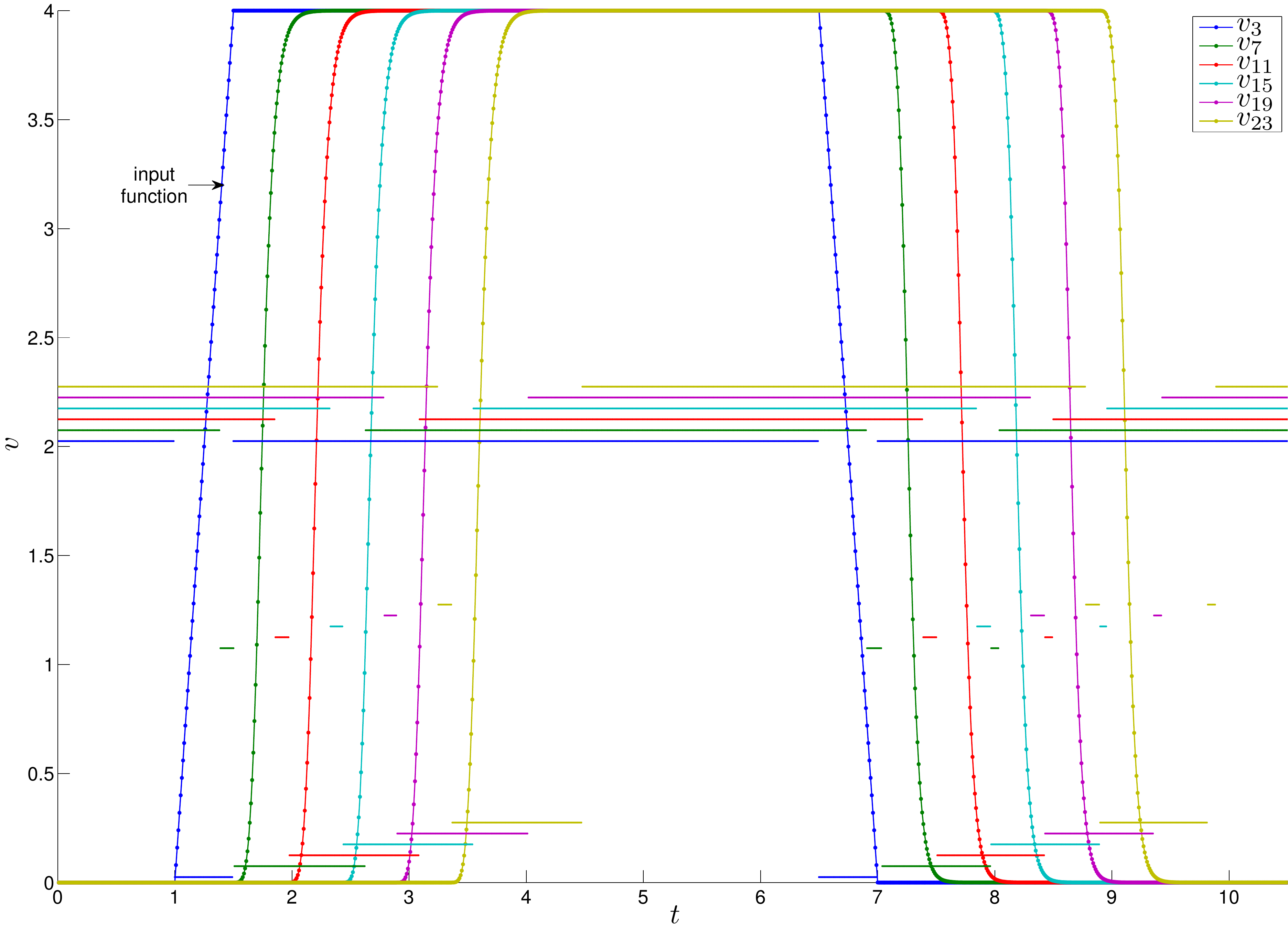} \\ \vspace*{4mm}
        
        \subfiguretitle{b)}
        \includegraphics[width=0.9\textwidth]{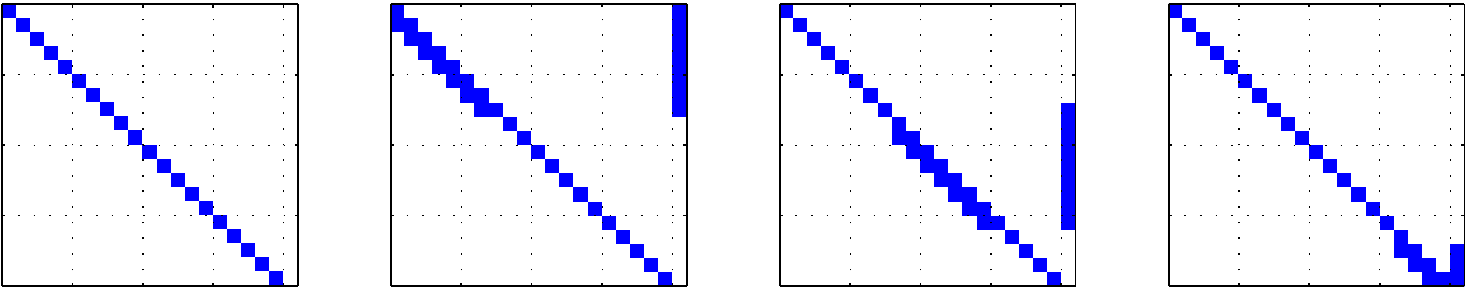} \\ \vspace*{4mm}
        
        \subfiguretitle{c)}
        \includegraphics[width=0.9\textwidth]{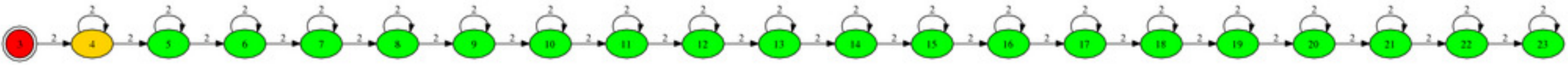} \\ \vspace*{1mm}
        \includegraphics[width=0.9\textwidth]{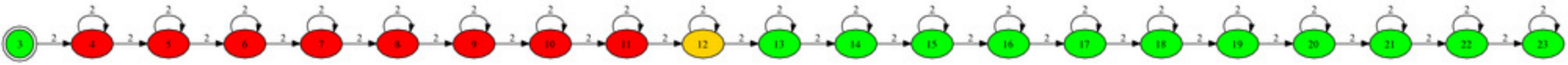} \\ \vspace*{1mm}
        \includegraphics[width=0.9\textwidth]{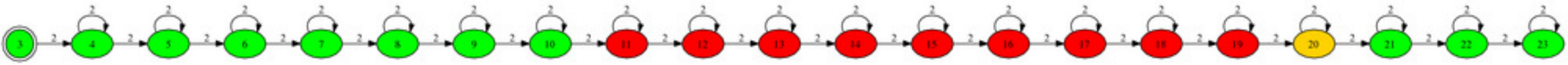} \\ \vspace*{1mm}
        \includegraphics[width=0.9\textwidth]{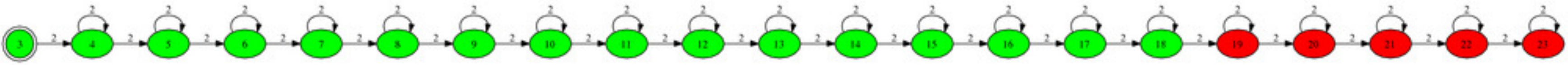} \\
        \caption[Excitation of the inverter chain with a piecewise linear function.]
                {Excitation of the inverter chain with a piecewise linear function. a) The dotted
                 trajectories show the input function and the voltages at intermediate vertices, the thin
                 horizontal lines in the corresponding color the activity state. Here, $ 0 $ denotes active,
                 $ 1 $ semi-latent, and $ 2 $ latent, respectively. b) Structure of $ \pd{f_I}{x_I} $ and
                 $ \dot{x}_I $ at time 1, 2, 3, and 4 for a threshold of $ 10^{-4} $. c) Activity states
                 at time 1, 2, 3, and 4, where red vertices represent active, yellow vertices semi-latent,
                 and green vertices latent regions.}
        \label{fig:InverterchainSimulation}
    \end{center}
\end{figure}
\end{example}

The example shows that the vertices are latent during the major part of the simulation, but each vertex at a different time. Below, we will propose modified Runge--Kutta methods for time-driven ordinary differential equations which take into account the dependency graph and the signal flow of the underlying system. The aim is to reduce the number of function evaluations without a huge loss of accuracy by exploiting the inherent latency. Since for some applications the function evaluations are time-consuming, whereas the update of the dependency graph can be accomplished in linear time, this approach offers the possibility to conceivably speed up the simulation.

\subsection{Explicit Runge--Kutta methods}

For the computation of the vectors $ k_E^q $ and $ k_I^q $, $ q = 1, \dots, s $, in \eqref{eq:RKTDODE}, it is necessary to evaluate the functions $ f_E $ and $ f_I $, respectively. The functions $ f_{I,i} $, $ i \in \indices{n_I} $, have to be recomputed if only one of the variables of the input set $ \pre{x}_{I, i} $ is active or semi-latent. If $ x_{I, i} $ is latent of a certain order, then we can reuse the previous value.

\begin{definition}[Signal-flow based Runge--Kutta method]
Given a time-driven ordinary differential equation, a \emph{signal-flow based Runge--Kutta method} is defined by
\begin{equation}
    \begin{split}
        x_E^{m+1} &= x_E^m + \Delta x_E^m, \\
        x_{I, i}^{m+1} &=
        \begin{cases}
            x_{I, i}^m,                     & \text{if } x_{I, i} \text{ is latent of order } s, \\
            x_{I, i}^m + \Delta x_{I, i}^m, & \text{otherwise},
        \end{cases}
    \end{split}
\end{equation}
for all $ i \in \indices{n_I} $. Here, $ s $ is again the number of stages. The vectors $ \Delta x_E^m $ and $ \Delta x_I^m $ are as defined in \eqref{eq:TDODE_update}.
\end{definition}

Provided that we use exact computation, the following theorem holds.

\begin{theorem}\label{th:ERK=sfERK}
The explicit Runge--Kutta methods and the corresponding signal-flow based methods are equivalent.
\end{theorem}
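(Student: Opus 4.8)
The plan is to argue by induction on the time index $m$ that the two schemes produce identical iterates, with the entire difficulty concentrated in showing that whenever $x_{I,i}$ is latent of order $s$ the standard update is vacuous, i.e.\ $\Delta x_{I,i}^m = 0$. First I would fix the bookkeeping: assuming the two methods agree through $t^m$, the state $x^m$, the semi-latency flags, and hence all latency orders coincide for both schemes, so it suffices to compare the single step from $t^m$ to $t^{m+1}$. For the external block there is nothing to prove, since $\Delta x_E^m$ depends only on $t$. For an internal index $i$ that is \emph{not} latent of order $s$, both schemes form $x_{I,i}^m + \Delta x_{I,i}^m$, so there I only need the stage inputs arriving from latent neighbours to be reproduced correctly; this falls out of the same lemma used for the latent indices.

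The key step is a stage-level lemma, proved by induction on the stage number $q$. Writing $k_I^{q}$ for the stages computed at $t^m$ and $\hat k_I^{q}$ for those of the previous step $t^{m-1}$ (shifted by $h$), I claim that if $x_{I,i}$ is latent of order $\nu$ then $(k_I^{q})_i = (\hat k_I^{q})_i$ for every $q \le \nu$. For the base case $q=1$, explicitness gives $k_I^{1} = f_I(k_E^1, x_I^m)$, so $(k_I^{1})_i$ reads only the entries of $k_E^1$ and $x_I^m$ indexed by $\pre{x_{I,i}}$; latency of order $\ge 1$ makes every such neighbour semi-latent, so the external inputs satisfy $f_{E,j}(t^m+c_1h)=f_{E,j}(t^{m-1}+c_1h)$ and the internal ones satisfy $x_{I,j}^m = x_{I,j}^{m-1}$, whence the two evaluations agree.

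For the inductive step I would exploit the strict lower-triangularity of $A$: the $q$-th intermediate argument is $X^{q}=x_I^m+h\sum_{r<q}a_{qr}k_I^{r}$, and $(k_I^{q})_i=f_{I,i}(k_E^q, X^q)$ depends only on the components of $X^q$ and $k_E^q$ with indices in $\pre{x_{I,i}}$. Latency of order $\nu\ge q$ forces each neighbour $j\in\pre{x_{I,i}}$ to be latent of order $\ge \nu-1$, hence (by the monotone nesting of the orders) of order $\ge q-1$, so the induction hypothesis gives $(k_I^{r})_j=(\hat k_I^{r})_j$ for all $r<q$, while semi-latency of $j$ gives $x_{I,j}^m=x_{I,j}^{m-1}$; therefore $(X^q)_j=(\hat X^q)_j$, the external inputs again coincide, and the two function values match. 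Taking $\nu=s$ shows that all $s$ stages agree at a node latent of order $s$, so $\Phi_i(t^m,x^m,h)=\sum_q b_q (k_I^{q})_i=\sum_q b_q(\hat k_I^{q})_i=\Phi_i(t^{m-1},x^{m-1},h)$, which vanishes because $x_{I,i}$ is in particular semi-latent; thus $\Delta x_{I,i}^m=0$ and the reuse $x_{I,i}^{m+1}=x_{I,i}^m$ is exact. The same lemma, applied to the latent neighbours of an active node, shows that their reused stage values feed the active evaluations correctly, which closes the time induction.

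I expect the main obstacle to be the careful accounting in the inductive step, namely matching the decrement of the latency order against the backward reach of the explicit stages through the dependency graph, and confirming that $\nu=s$ is precisely the threshold needed so that both the latent node's own increment vanishes and every active node still receives correct inputs. The external/internal split and the base case are routine once the indexing through $\pre{x_{I,i}}$ and the role of the strictly lower-triangular $A$ are in place.
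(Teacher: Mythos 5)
Your proposal is correct and follows essentially the same route as the paper: a stage-level induction on $q$ showing that $k_{I,i}^{m,q}=k_{I,i}^{m-1,q}$ whenever $x_{I,i}$ is latent of order $\ge q$, and then concluding that the increment of an order-$s$ latent node coincides with the previous (vanishing) increment, so the standard update is vacuous. Your extra scaffolding---the outer induction on the time index and the remark that active nodes receive correct reused stage values from latent neighbours---merely makes explicit what the paper leaves implicit, and does not change the argument.
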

\begin{proof}
\allowdisplaybreaks
In the proof, we add the superscript $ m $ or $ m-1 $ to the stages to differentiate between the different time points. Let $ x_{I, i} $ be latent at $ t^m $, i.e.\ $ \Phi_i(t^{m-1}, x^{m-1}, h) = 0 $ and
\begin{align*}
    f_{E, j}(t^m + c_q h) = f_{E, j}(t^{m-1} + c_q h)
        & \; \Rightarrow \; k_{E, j}^{m, q} = k_{E, j}^{m-1, q} \quad \forall x_{E, j} \in \pre{x_{I, i}}, \\
    \Phi_j(t^{m-1}, x^{m-1}, h) = 0
        & \; \Rightarrow \; x_{I, j}^m = x_{I, j}^{m-1} \quad \forall x_{I, j} \in \pre{x_{I, i}}.
\end{align*}
For $ q = 1 $, we have $ c_1 = 0 $ and thus
\begin{equation*}
    k_{I, i}^{m, 1} = f_{I, i}(x_E^m, x_I^m) = f_{I, i}(x_E^{m-1}, x_I^{m-1}) = k_{I, i}^{m-1, 1}
\end{equation*}
since $ f_{I, i} $ depends only on the values of the input set $ \pre{x_{I, i}} $ and these values are the same as in the previous time step by definition. Now, assume that $ x_{I, i} $ is latent of order $ 2 $, i.e.\ all inputs of $ x_{I, i} $ are at least latent of order $ 1 $. If follows that
\begin{equation*}
    \begin{split}
        k_{I, i}^{m, 2} &= f_{I, i}(k_E^{m, 2}, x_I^m + h \, a_{21} k_I^{m, 1}) \\
                        &= f_{I, i}(k_E^{m-1, 2}, x_I^{m-1} + h \, a_{21} k_I^{m-1, 1}) = k_{I, i}^{m-1, 2}
    \end{split}
\end{equation*}
using the same reasoning again. Furthermore, by induction it can be shown that
\begin{equation*}
    \begin{split}
        k_{I, i}^{m, q} &= f_{I, i}\big(k_E^{m, q}, x_I^m + h \sum_{r=1}^{q-1} a_{qr} k_I^{m, r}\big) \\
                        &= f_{I, i}\big(k_E^{m-1, q}, x_I^{m-1} + h \sum_{r=1}^{q-1} a_{qr} k_I^{m-1, r}\big)
                         = k_{I, i}^{m-1, q}
    \end{split}
\end{equation*}
if $ x_{I, i} $ is latent of order $ q $ and
\begin{equation*}
    \begin{split}
        x_{I, i}^{m+1} &= x_{I, i}^m + h \, \Phi_i(t^m, x^m, h) \\
                       &= x_{I, i}^m + h \sum_{q=1}^s b_q k_{I, i}^{m, q} \\
                       &= x_{I, i}^{m-1} + h \sum_{q=1}^s b_q k_{I, i}^{m-1, q} \\
                       &= x_{I, i}^{m-1} + h \, \Phi_i(t^{m-1}, x^{m-1}, h) = x_{I, i}^m
    \end{split}
\end{equation*}
if $ x_{I, i} $ is latent of order $ s $.
\end{proof}

For numerical computations, we do not update a variable if it is latent of order at least one assuming that the influence of longer paths is negligibly small. In the following, we will abbreviate the standard classical fourth-order Runge--Kutta method as RK and the corresponding signal-flow based method as sfRK.

\begin{example} \label{ex:Inverterchain_sfRK}
Consider once again the inverter chain, which is a popular benchmark problem for multirate integration schemes. To analyze the efficiency of the signal-flow based standard Runge--Kutta method, we simulate the inverter chain of length $ N = 100 $ with variably time-consuming function evaluations and different rates of inherent latency. To vary the amount of latency, we apply periodic input functions with different delays between two adjacent pulse signals, as shown in Figure~\ref{fig:InverterchainPwlInput}. The complexity of the transistor model is increased by artificially adding terms which do not affect the solution of the system.

\begin{figure}[htb]
    \centering
    \includegraphics[width=0.5\textwidth]{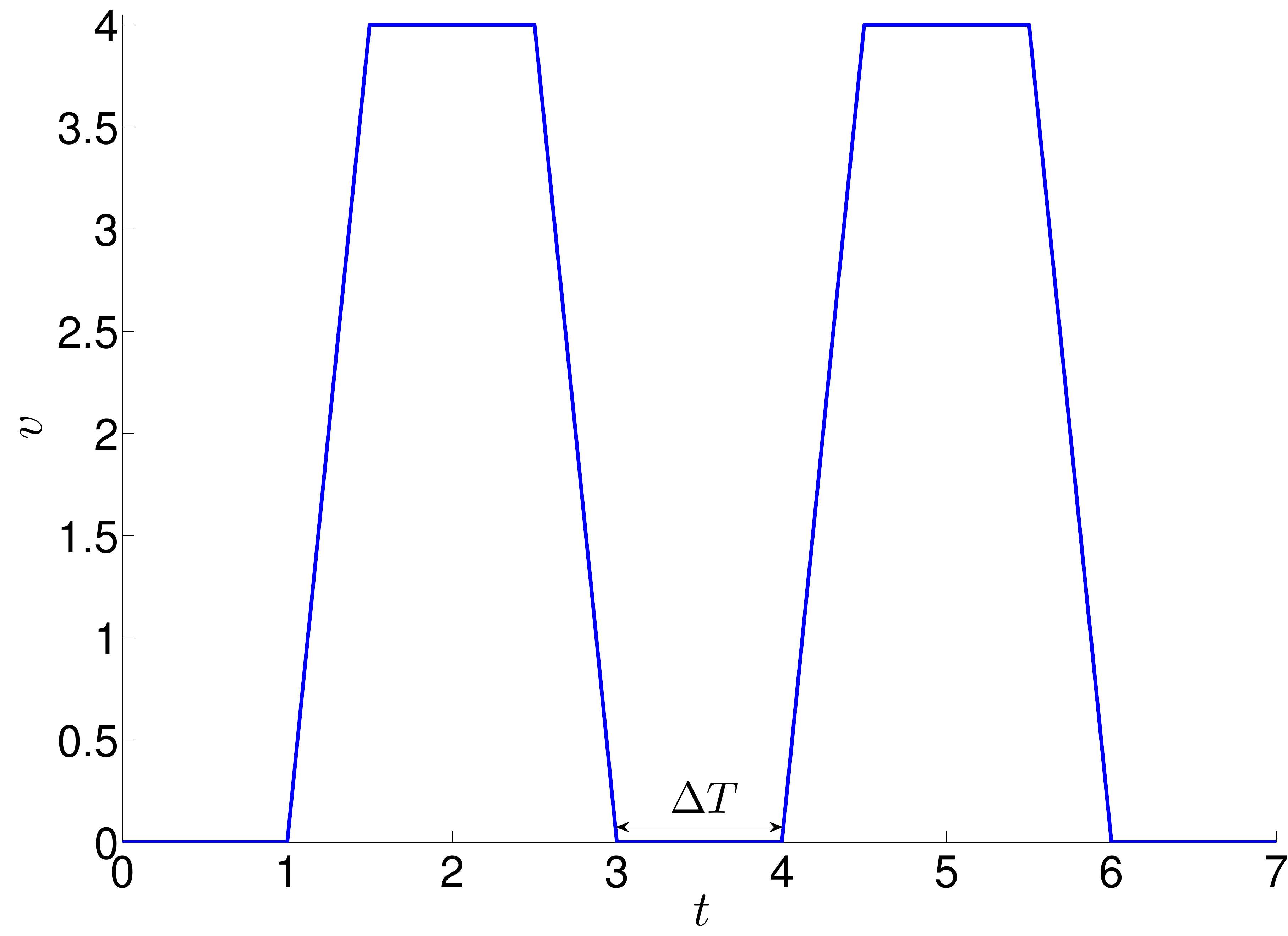}
    \caption{Piecewise linear input function with varying delay $ \Delta T $ to emulate latency.}
    \label{fig:InverterchainPwlInput}
\end{figure}

The runtimes of the simulation with both the standard Runge--Kutta method and the corresponding signal-flow based method for varying model complexities and input functions are shown in Figure~\ref{fig:Inverterchain_sfRK}. Here, the time interval is $ \mathbb{I} = [0, 40] $, the step size $ h = \frac{1}{100} $, and the latency parameter $ \varepsilon = 10^{-6} $. While the runtime of RK does not depend on the inherent latency, the runtime of sfRK decreases with increasing latency. Furthermore, the more complex the transistor model, the bigger the speedup of the signal-flow based integration scheme due to the reduced number of function evaluations. Table~\ref{tab:Inverterchain_sfRK} contains the number of transistor model evaluations for different values of $ \Delta T $. The influence of $ \varepsilon $ on the speedup of sfRK and the average difference per step between RK and sfRK for a fixed delay $ \Delta T = 10 $ are shown in Figure~\ref{fig:InverterchainEpsilon_RK}.

\begin{figure}[htbp]
    \begin{center}
        \begin{minipage}[c]{0.45\textwidth}
            \centering
            \subfiguretitle{RK} \vspace*{0.35em}
            \includegraphics[width=\textwidth]{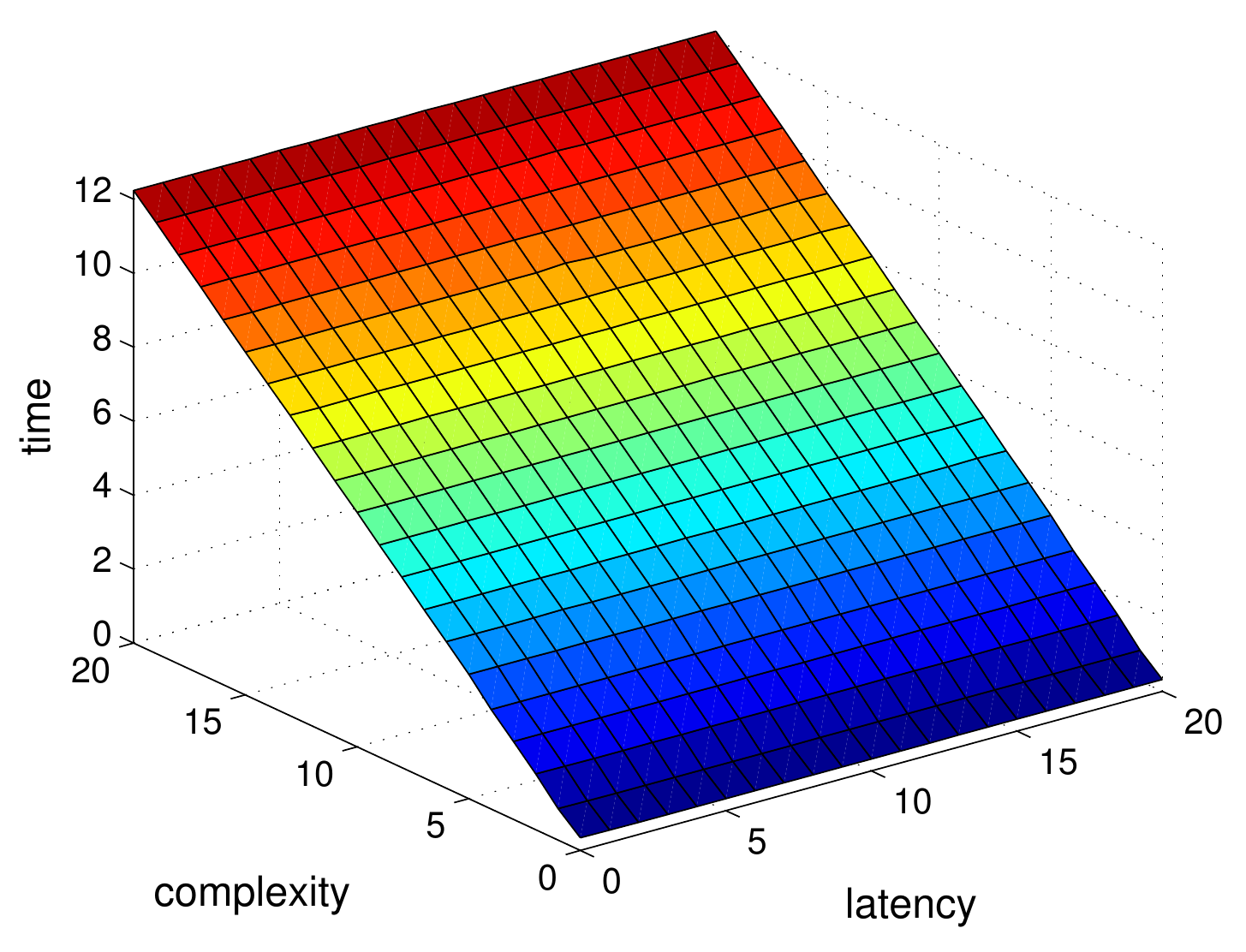}
        \end{minipage}
        \begin{minipage}[c]{0.45\textwidth}
            \centering
            \subfiguretitle{sfRK} \vspace*{0.35em}
            \includegraphics[width=\textwidth]{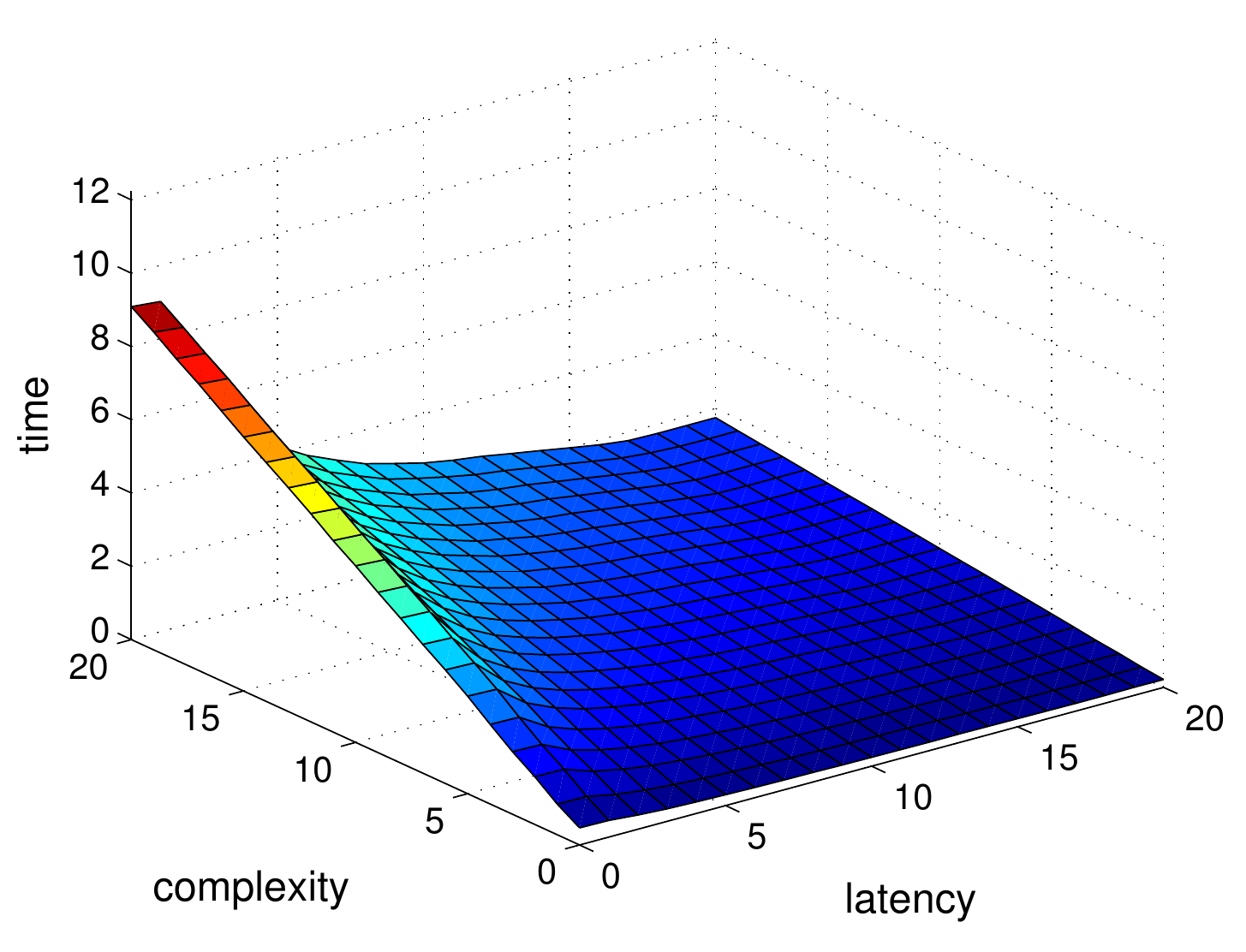}
        \end{minipage}
        \\ \vspace*{3mm}
        \begin{minipage}[c]{0.45\textwidth}
            \centering
            \subfiguretitle{RK vs. sfRK} \vspace*{0.35em}
            \includegraphics[width=\textwidth]{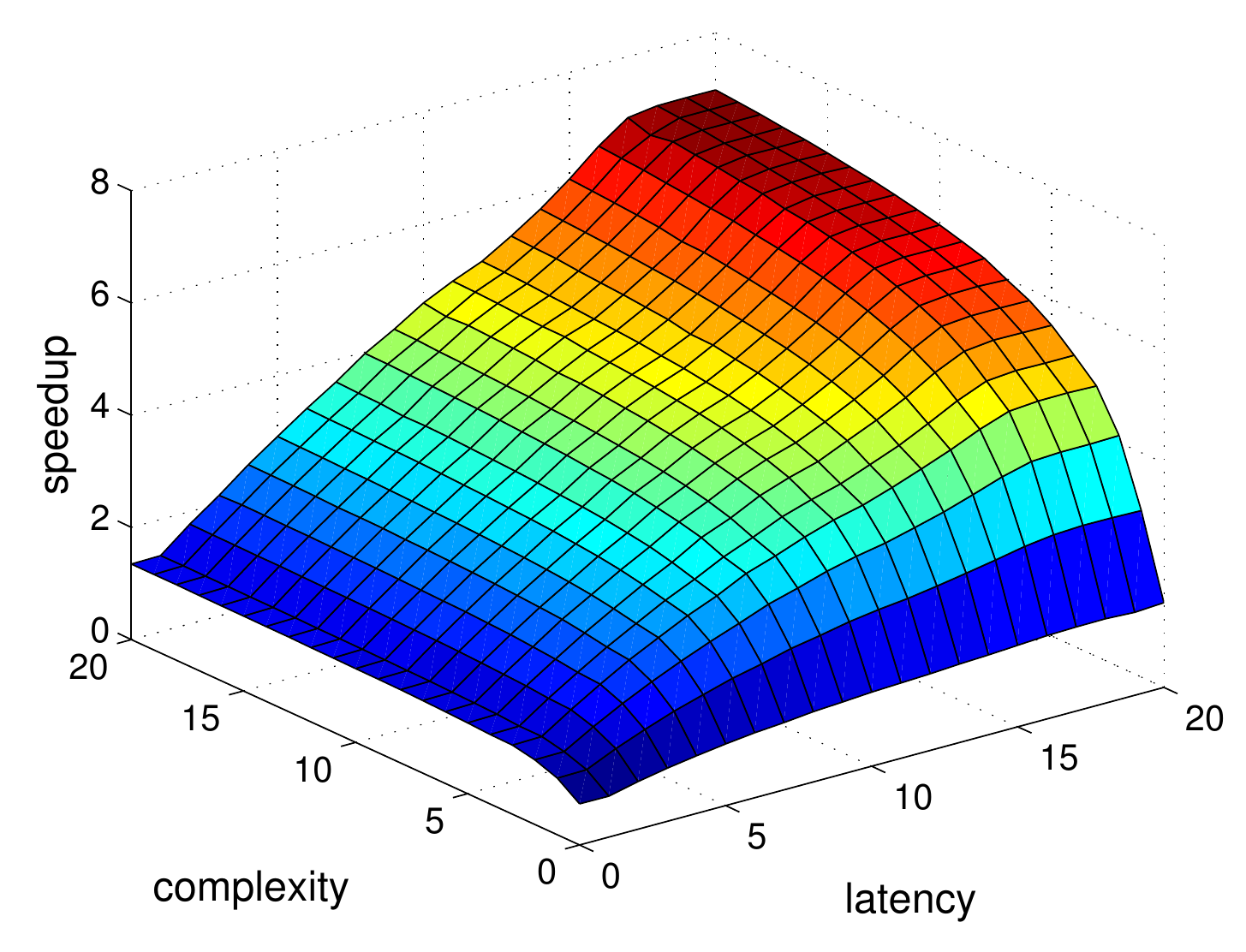}
        \end{minipage}
    \end{center}
    \caption{Influence of the complexity and latency on the runtime of RK and sfRK.}
    \label{fig:Inverterchain_sfRK}
\end{figure}

\begin{table}[htb]
    \caption{Number of transistor model evaluations of RK and sfRK.}
    \newcommand{\mc}[1]{\multicolumn{1}{|c|}{#1}}
    \newcommand{\ts}{\mspace{2mu}}
    \footnotesize
    \centering
    \begin{tabular}{|r*{5}{|r}|}
        \hline
        \mc{$ \Delta T $} & \mc{0} & \mc{5} & \mc{10} & \mc{15} & \mc{20} \\
        \hline
        \hline
                  RK & $ 3\ts200\ts000 $ & $ 3\ts200\ts000 $ & $ 3\ts200\ts000 $ & $ 3\ts200\ts000 $ & $ 3\ts200\ts000 $ \\
                sfRK & $ 2\ts317\ts152 $ & $ 1\ts046\ts664 $ & $     649\ts976 $ & $     479\ts360 $ & $     413\ts024 $ \\
        \hline
    \end{tabular}
    \label{tab:Inverterchain_sfRK}
\end{table}

We can reduce the number of function evaluations even for $ \Delta T = 0 $ since at the beginning of the simulation the circuit is in a steady state and it takes a short time until the input signal reaches the last inverter. During that time, parts of the circuit are inactive and need not be evaluated.

\begin{figure}[htb]
    \begin{center}
        \begin{minipage}[c]{0.45\textwidth}
            \centering
            \subfiguretitle{Speedup} \vspace*{0.35em}
            \includegraphics[width=\textwidth]{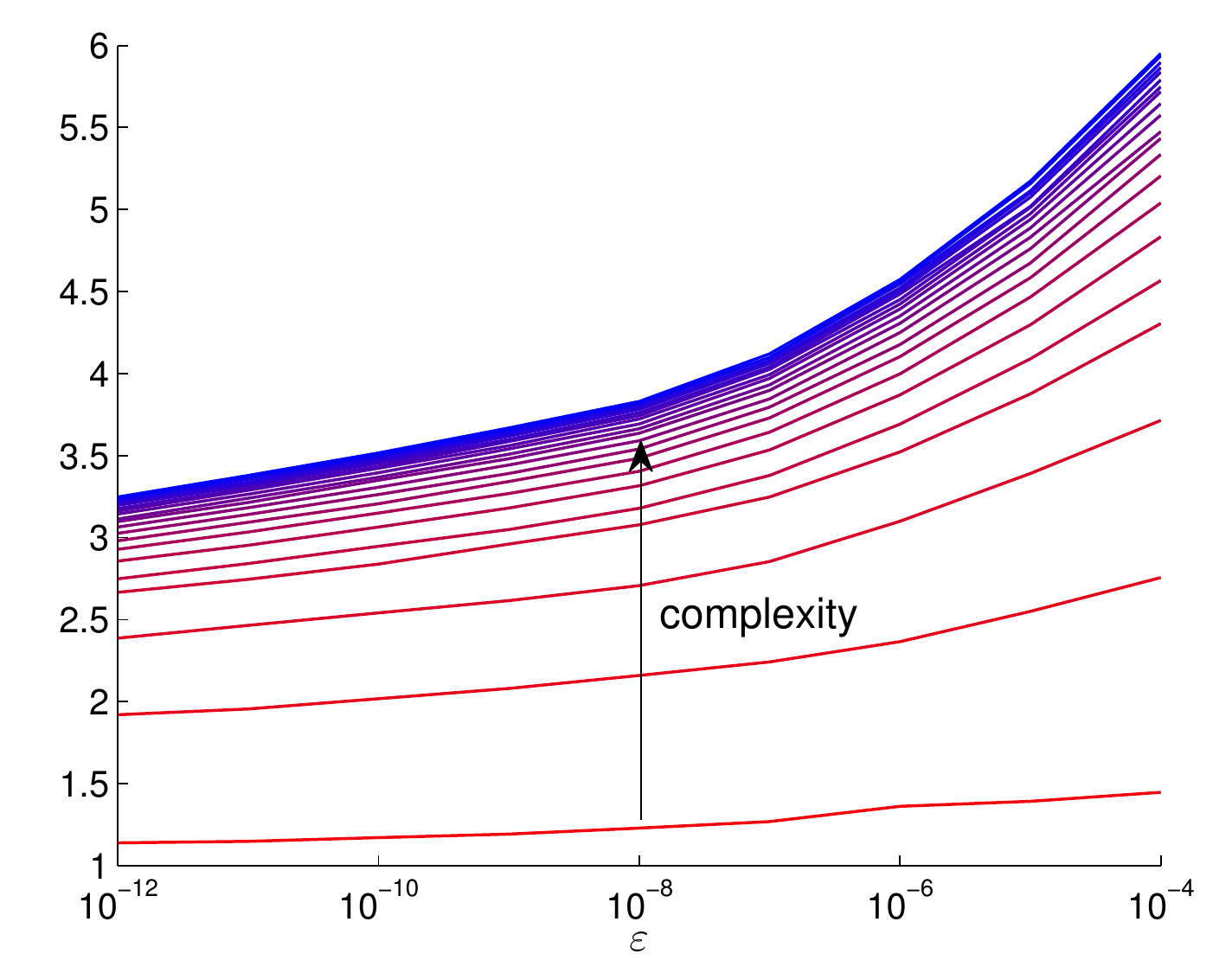}
        \end{minipage}
        \begin{minipage}[c]{0.45\textwidth}
            \centering
            \subfiguretitle{Deviation} \vspace*{0.35em}
            \includegraphics[width=\textwidth]{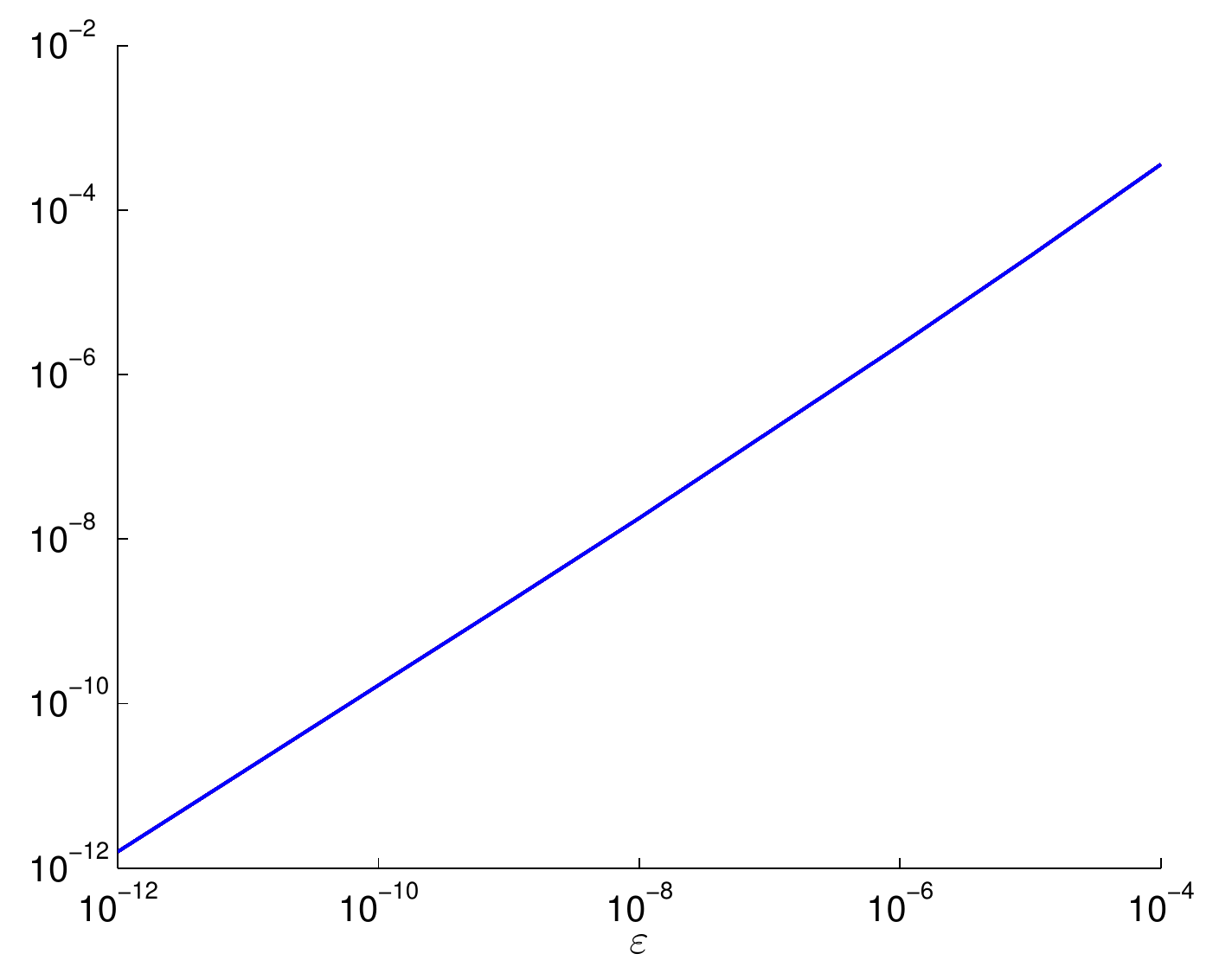}
        \end{minipage}
    \end{center}
    \caption{Speedup and deviation of sfRK as a function of $ \varepsilon $.}
    \label{fig:InverterchainEpsilon_RK}
\end{figure}

Note that the deviation does not depend on the complexity since only artificial terms were introduced to model different complexities of the transistor model.
\end{example}

\subsection{Implicit Runge--Kutta methods}

The stages of implicit Runge--Kutta methods
 cannot be evaluated successively. At each time point, a system of nonlinear equations has to be solved. To solve these systems with the Newton--Raphson method, the Jacobian $ \pd{f_I}{x_I} $ has to be computed. For the transient analysis of integrated circuits, this can be accomplished efficiently using so-called element stamps~\cite{GFtM05}. Every time the right-hand side $ f_I $ is evaluated, the Jacobian $ \pd{f_I}{x_I}$---if needed---is generated simultaneously.

However, only the nonlinear equations that correspond to active regions will be solved assuming that the influence of and on the latent regions is negligibly small. Furthermore, it is then only necessary to compute and factorize the fraction of the Jacobian which represents the active part. That is, we can exploit the latency also on the level of the nonlinear and linear systems of equations. In our implementation, a variable is not updated if it is at least latent of order one, the influence of longer paths is neglected again.

In the following, we will consider in particular the trapezoidal rule, which is frequently used for the simulation of integrated circuits. Since the second version of \textsc{Spice} most circuit simulators apply either the trapezoidal rule or BDF schemes to solve the circuit equations~\cite{GFtM05}. We will denote the trapezoidal rule abbreviatory as TR and the signal-flow based trapezoidal rule as sfTR.

The increment function of the trapezoidal rule tailored to time-driven ordinary differential equations can be written as
\begin{equation}
    \Phi(t^m, x^m, h) = \frac{1}{2}\left(f_I(x_E^m, x_I^m) + f_I(x_E^{m+1}, x_I^{m+1})\right).
\end{equation}
That is, at each time step a system of nonlinear equations
\begin{equation}
    F(z) \coloneqq z - x_I^m - \frac{h}{2}\left(f_I(x_E^m, x_I^m) + f_I(x_E^{m+1}, z)\right) = 0
\end{equation}
has to be solved. Using the Newton--Raphson method, this leads to the iteration
\begin{equation}
    z_{k+1} = z_k + \Delta z_k,
\end{equation}
where $ \Delta z_k $ is the solution of the linear system of equations
\begin{equation}
    \left(I - \frac{h}{2} \pd{f_I}{x_I}(x_E^{m+1}, z_k)\right) \Delta z_{k}
        = -z_k + x_I^m + \frac{h}{2}\left(f_I(x_E^m, x_I^m) + f_I(x_E^{m+1}, z_k)\right).
\end{equation}
As a starting point for the iteration, we use $ z_0 = x_I^m $.

\begin{example} \label{ex:Inverterchain_sfTR}
To facilitate comparisons of the explicit Runge--Kutta method and the implicit trapezoidal rule, we repeat the simulation of the inverter chain of length $ N = 100 $ with the settings described in Example~\ref{ex:Inverterchain_sfRK}. Figure~\ref{fig:Inverterchain_sfTR} shows the runtimes of the simulation with  both the standard trapezoidal rule and the signal-flow based trapezoidal rule for varying model complexities and input functions again. We use the Newton--Raphson method to solve the nonlinear systems and the LU factorization to solve the resulting linear systems of equations. For the signal-flow based simulation, only the active and semi-latent parts of the nonlinear and linear systems of equations are generated and solved. Here, the influence of the model complexity is negligible since the runtime of the LU factorizations is dominating. Table~\ref{tab:Inverterchain_sfTR} contains the number of required transistor model evaluations. The influence of $ \varepsilon $ on the speedup of sfTR and the average deviation per step for a fixed delay $ \Delta T = 10 $ are shown in Figure~\ref{fig:InverterchainEpsilon_TR}.

If the delay $ \Delta T $ of the input function is bigger than $ 12 $ or the period is bigger than $ 14 $, respectively, then the trapezoidal rule depends on the latency. This is due to the fact that the signal needs approximately this period of time to pass all inverters. For larger values of $ \Delta T $, there is a small time interval where all vertices are latent and thus the Newton--Raphson method needs less iterations to converge.

\begin{figure}[htbp]
    \begin{center}
        \begin{minipage}[c]{0.45\textwidth}
            \centering
            \subfiguretitle{TR} \vspace*{0.35em}
            \includegraphics[width=\textwidth]{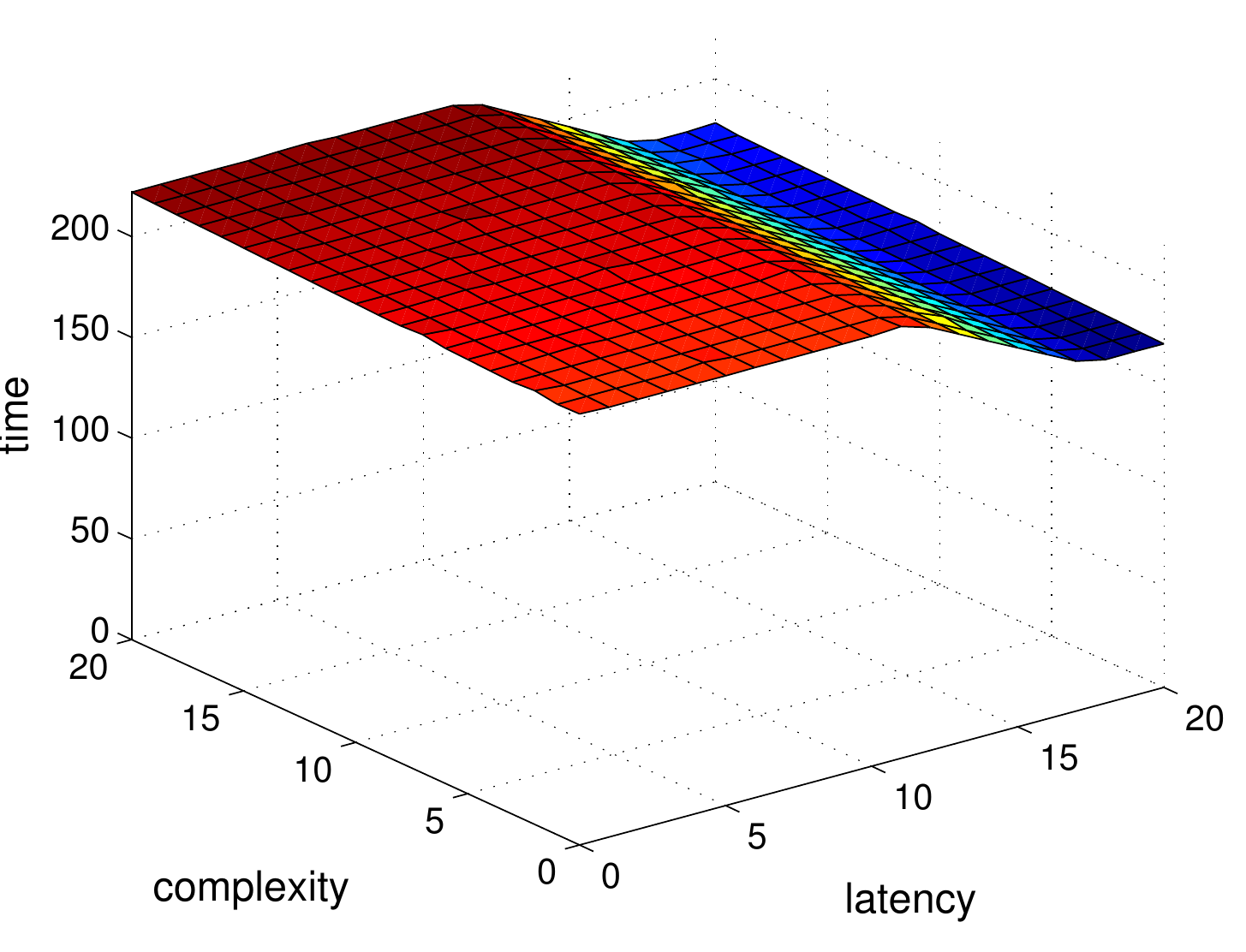}
        \end{minipage}
        \begin{minipage}[c]{0.45\textwidth}
            \centering
            \subfiguretitle{sfTR} \vspace*{0.35em}
            \includegraphics[width=\textwidth]{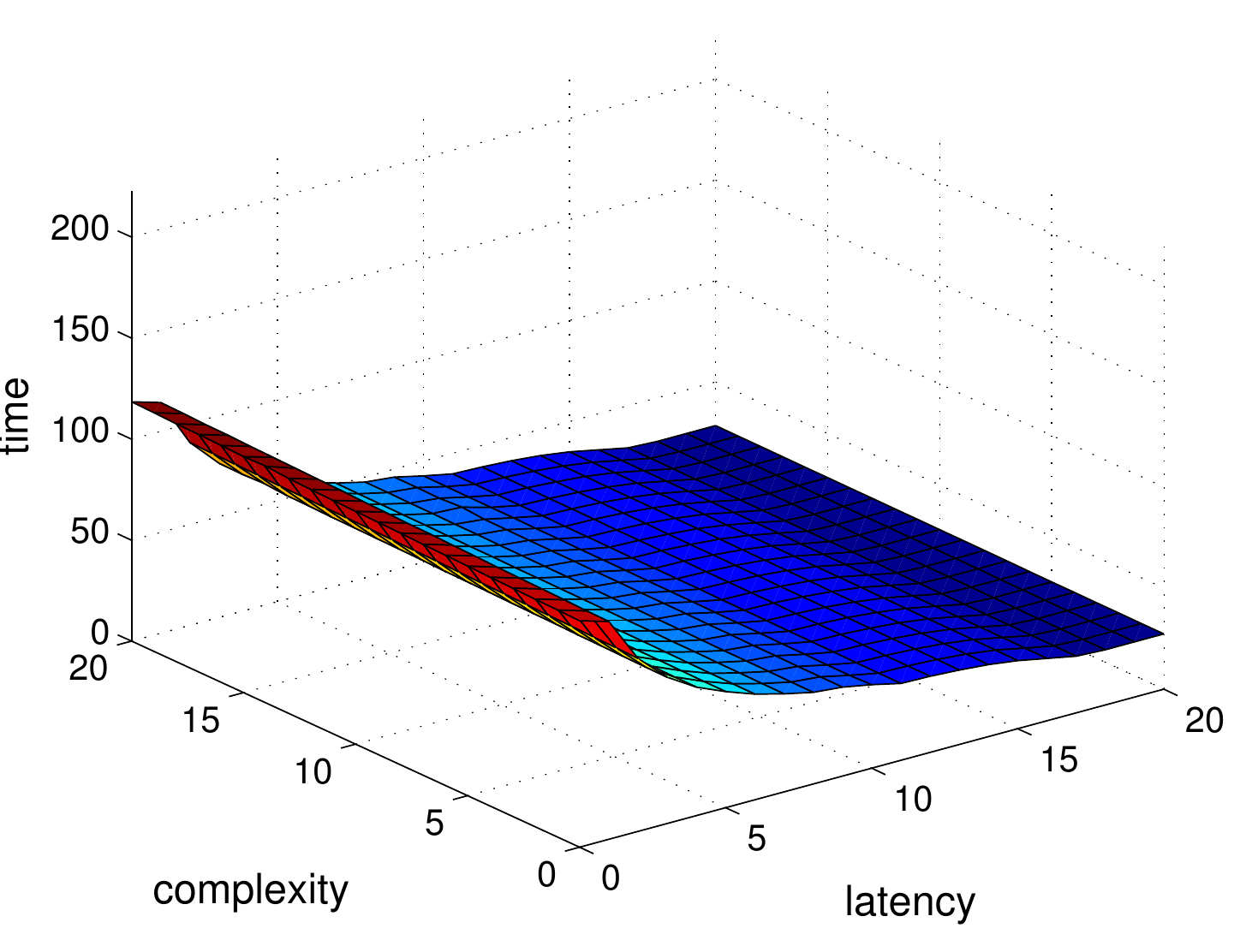}
        \end{minipage}
        \\ \vspace*{3mm}
        \begin{minipage}[c]{0.45\textwidth}
            \centering
            \subfiguretitle{TR vs. sfTR} \vspace*{0.35em}
            \includegraphics[width=\textwidth]{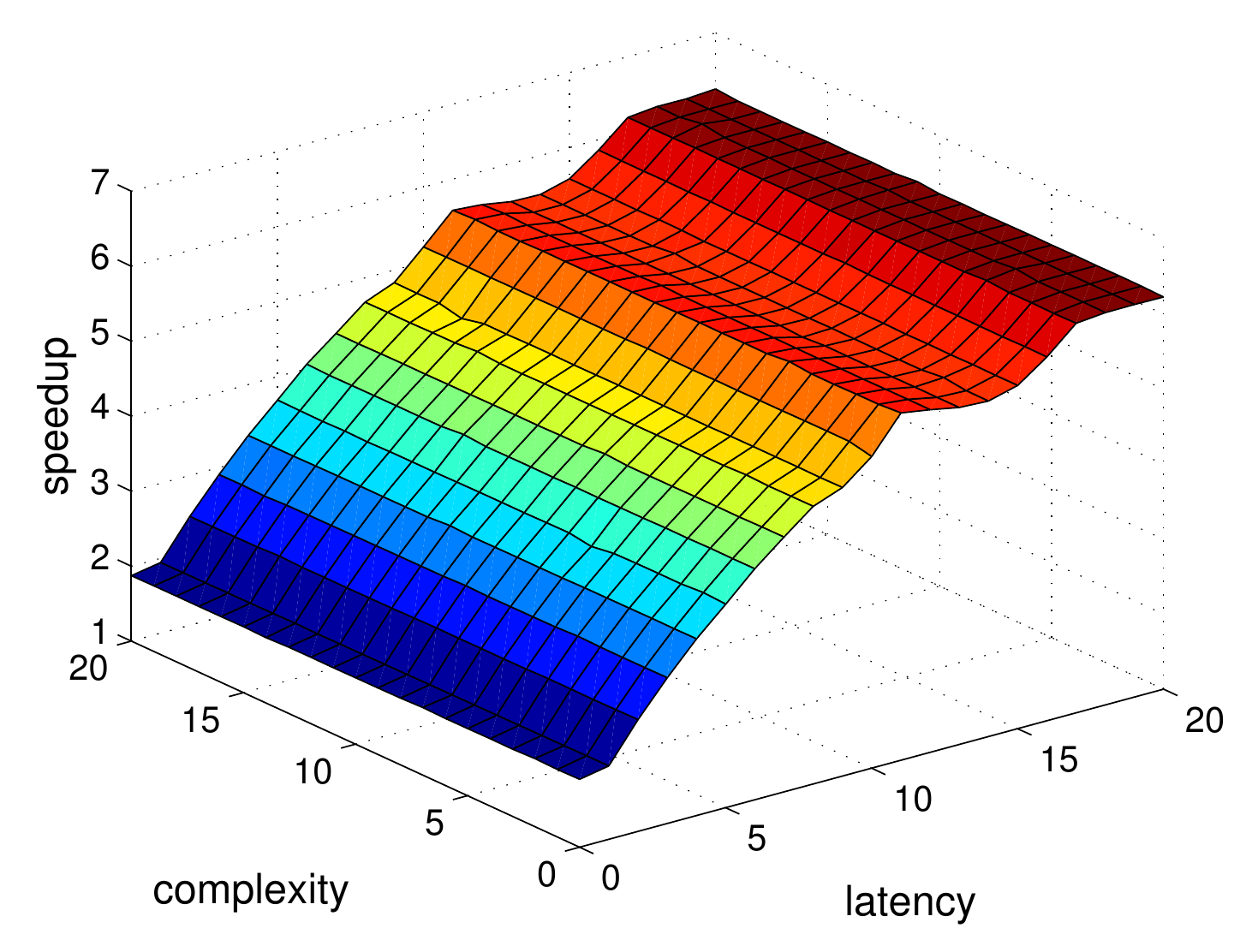}
        \end{minipage}
    \end{center}
    \caption{Influence of the complexity and latency on the runtime of TR and sfTR.}
    \label{fig:Inverterchain_sfTR}
\end{figure}

\begin{table}[htb]
    \caption{Number of transistor model evaluations of TR and sfTR.}
    \newcommand{\mc}[1]{\multicolumn{1}{|c|}{#1}}
    \newcommand{\ts}{\mspace{2mu}}
    \footnotesize
    \centering
    \begin{tabular}{|r*{5}{|r}|}
        \hline
        \mc{$ \Delta T $} & \mc{0} & \mc{5} & \mc{10} & \mc{15} & \mc{20} \\
        \hline
        \hline
                  TR & $ 2\ts353\ts600 $ & $ 2\ts353\ts600 $ & $ 2\ts353\ts600 $ & $ 2\ts075\ts200 $ & $ 1\ts881\ts600 $ \\
                sfTR & $ 1\ts736\ts618 $ & $     784\ts214 $ & $     486\ts788 $ & $     357\ts118 $ & $     307\ts582 $ \\
        \hline
    \end{tabular}
    \label{tab:Inverterchain_sfTR}
\end{table}

\begin{figure}[htb]
    \begin{center}
        \begin{minipage}[c]{0.45\textwidth}
            \centering
            \subfiguretitle{Speedup} \vspace*{0.35em}
            \includegraphics[width=\textwidth]{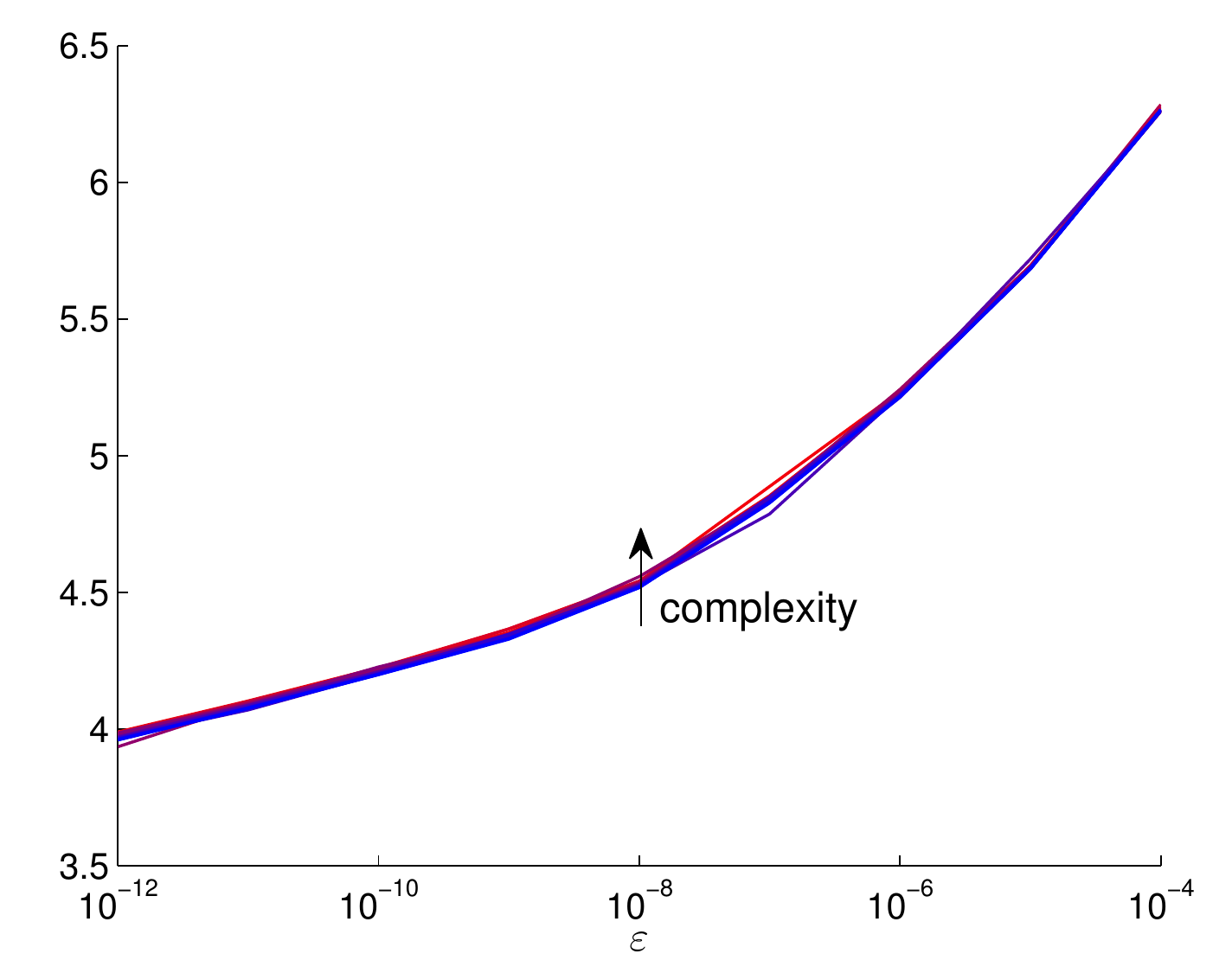}
        \end{minipage}
        \begin{minipage}[c]{0.45\textwidth}
            \centering
            \subfiguretitle{Deviation} \vspace*{0.35em}
            \includegraphics[width=\textwidth]{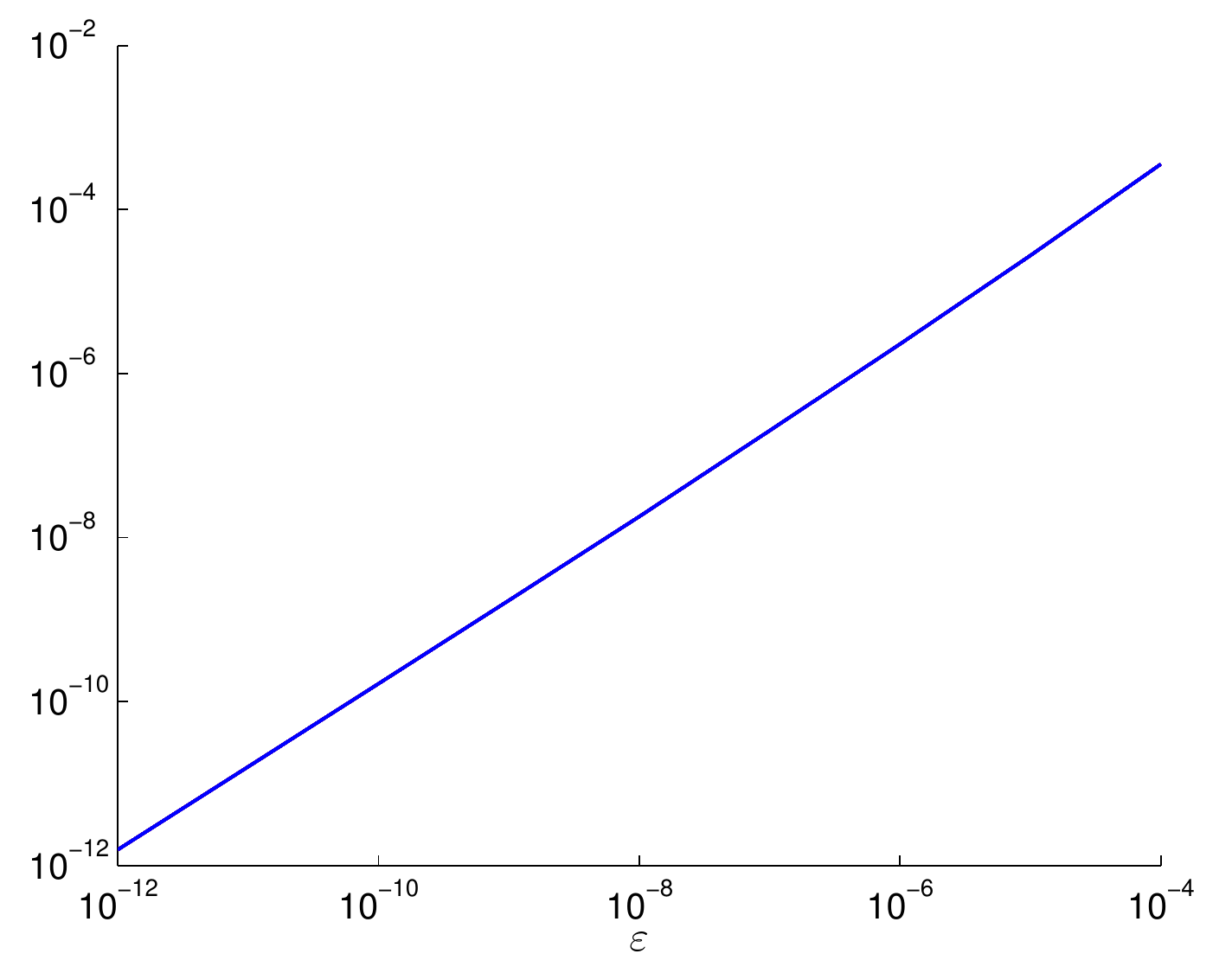}
        \end{minipage}
    \end{center}
    \caption{Speedup and deviation of sfTR as a function of $ \varepsilon $.}
    \label{fig:InverterchainEpsilon_TR}
\end{figure}
\end{example}

\section{Generalization to periodic systems}

In power electronic circuits, diodes and semiconductor switches are constantly changing their status and a steady state condition is by definition reached when the waveforms are periodic with a time period $ T $ which depends on the specific nature of the circuit~\cite{MUR95}. The time scales of these circuits may differ by several orders of magnitude and the simulation requires very small step sizes to cover the dynamics of the fastest subsystems. The maximum simulation time, on the other hand, is usually determined by the slowest subsystems. Thus, a detailed simulation of power electronic circuits is in general very time-consuming. Now, we want to extend the signal-flow based approach to identify and exploit not the latency but the periodicity of subsystems in order to reduce the runtime of the simulation.

\begin{definition}[Semi-periodicity]
Let $ T $ be the fundamental period of the system and $ h = \frac{T}{p} $, $ p \in \mathbb{N} $, the step size.
\begin{enumerate}
\item An external variable $ x_{E, i} $, $ i \in \indices{n_E} $, is said to be \emph{semi-periodic} at $ t^m $ if
\begin{equation}
    f_{E, i}(t^m + c_q h) = f_{E, i}(t^{m-p} + c_q h)
\end{equation}
for all $ q = 1, \dots, s $.
\item An internal variable $ x_{I, i} $, $ i \in \indices{n_I} $, is defined to be \emph{semi-periodic} if
\begin{equation}
    x_{I, i}^m = x_{I, i}^{m-p}.
\end{equation}
\end{enumerate}
\end{definition}

In contrast to the definition of semi-latency, the variables are not compared to the previous time step, but to the corresponding time step of the previous period. Roughly speaking, latency can be regarded as a special case of periodicity for which $ p = 1 $.

\begin{definition}[Periodicity]
A variable $ x_i $, $ i \in \indices{n} $, is called \emph{periodic of order} $ 1 $, if $ x_i $ and all variables of the set $ \pre{x_i} $ are semi-periodic. Additionally, a periodic variable $ x_i $ is defined to be \emph{periodic of order} $ \nu $ if all variables in $ \pre{x_i} $ are at least periodic of order $ \nu-1 $.
\end{definition}

Let $ \varepsilon $ be again a given error tolerance. For numerical computations, the semi-periodicity conditions are replaced by $ \abs{x_{E, i}^m - x_{E, i}^{m-p}} < \varepsilon $ and $ \abs{x_{I, i}^m - x_{I, i}^{m-p}} < \varepsilon $, respectively. Analogously to the latency-based methods, we do not update a variable if it is periodic of order one or higher. To illustrate the different activity states, we use the inverter chain.

\begin{example}
The inverter chain is excited with a piecewise linear function which is periodic with $ T = 4 $ for $ t > 1 $. The input function and the resulting node voltages at intermediate vertices are shown in Figure~\ref{fig:InverterchainSimulationP}.

\begin{figure}[htb]
    \begin{center}
        \begin{minipage}[c]{0.45\textwidth}
            \centering
            \subfiguretitle{Latency}
            \includegraphics[width=\textwidth]{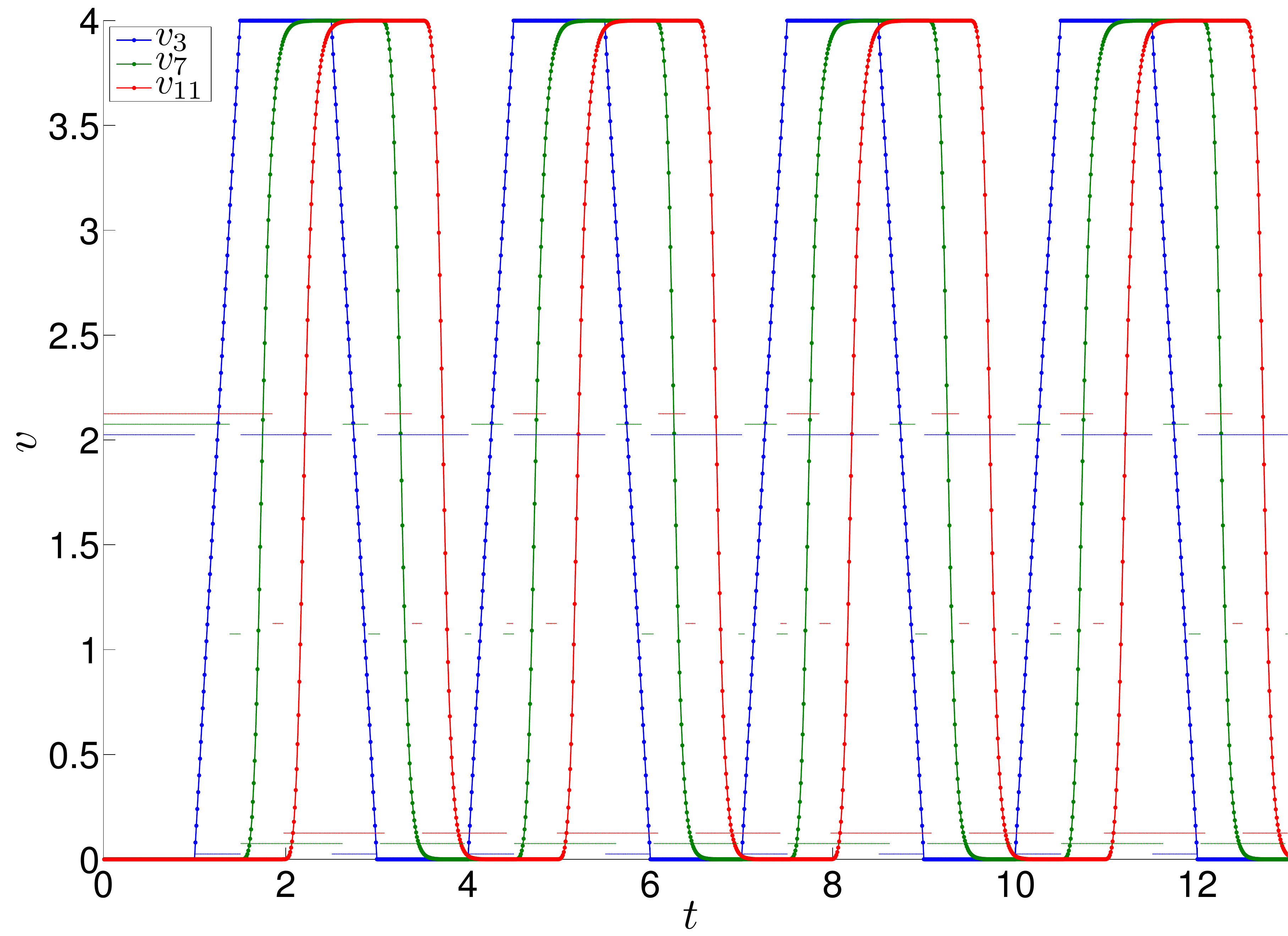}
        \end{minipage}
        \begin{minipage}[c]{0.45\textwidth}
            \centering
            \subfiguretitle{Periodicity}
            \includegraphics[width=\textwidth]{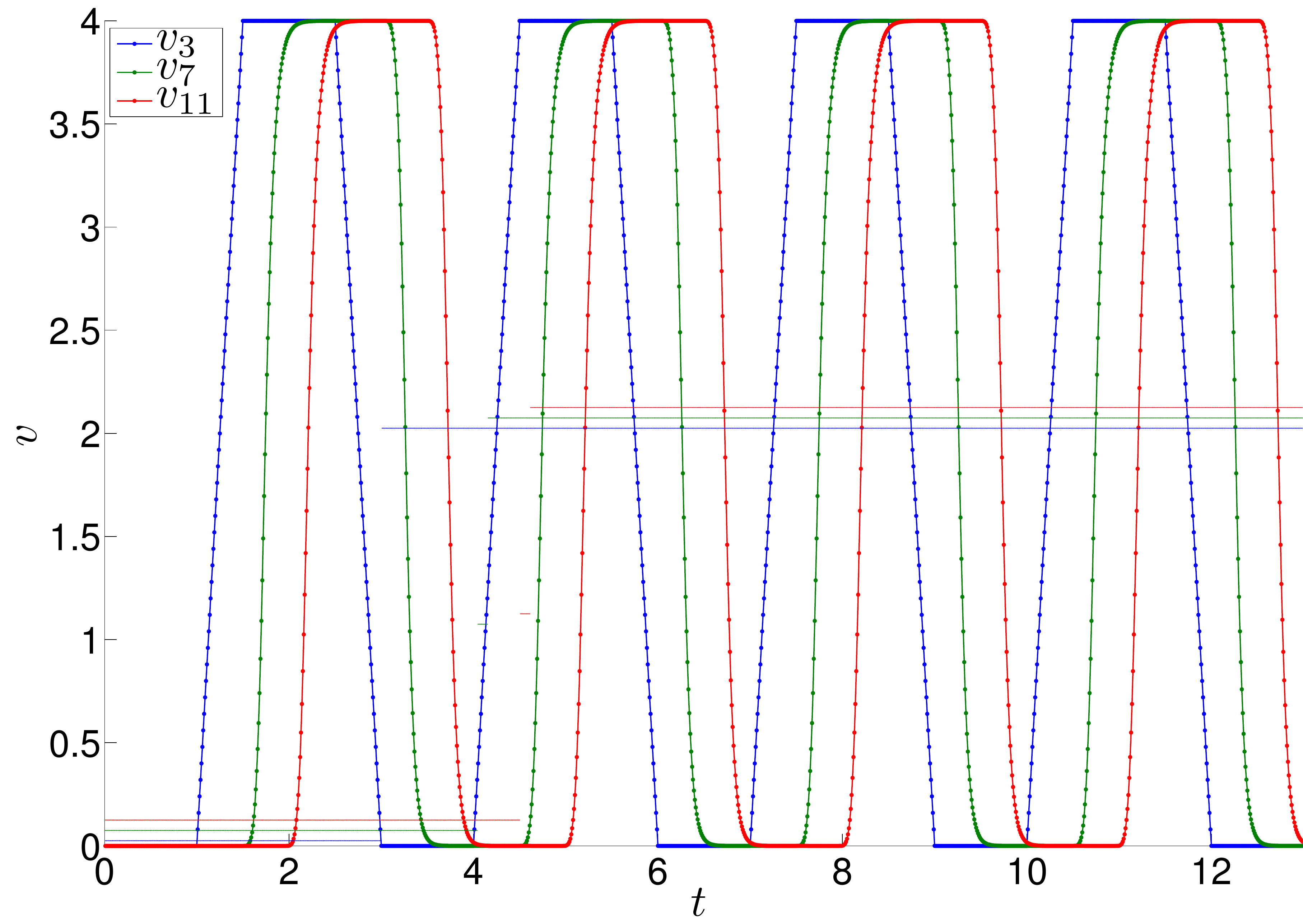}
        \end{minipage}
    \end{center}
   \caption{Comparison of latency and periodicity. The curves show the node voltages $ v_3 $, $ v_7 $,
            and $ v_{11} $, the thin horizontal lines the corresponding states of the variables. Here, $ 0 $
            denotes active, $ 1 $ semi-latent or semi-periodic, and $ 2 $ latent or periodic, respectively.}
    \label{fig:InverterchainSimulationP}
\end{figure}
\end{example}

\begin{definition}[Signal-flow based periodic Runge--Kutta method]
An explicit \emph{signal-flow based periodic Runge--Kutta method} for a time-driven ordinary differential equation is defined by
\begin{equation}
    \begin{split}
        x_E^{m+1} &= x_E^m + \Delta x_E^m, \\
        x_{I, i}^{m+1} &=
        \begin{cases}
            x_{I, i}^{m-p+1},               & \text{if } x_{I, i} \text{ is periodic of order } s, \\
            x_{I, i}^m + \Delta x_{I, i}^m, & \text{otherwise},
        \end{cases}
    \end{split}
\end{equation}
for $ i \in \indices{n_I} $.
\end{definition}

To exploit the periodicity of subsystems and to reduce the number of function evaluations, we store the vectors $ x^{m-p+1}, x^{m-p+2}, \dots, x^m $ in a circular buffer.

\begin{theorem} \label{th:pERK=sfpERK}
The explicit Runge--Kutta methods and the corresponding signal-flow based methods for periodic systems are equivalent.
\end{theorem}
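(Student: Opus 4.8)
The plan is to adapt the proof of Theorem~\ref{th:ERK=sfERK} almost verbatim, replacing the previous time point $t^{m-1}$ by the corresponding time point $t^{m-p}$ of the previous period throughout, and replacing ``latent'' by ``periodic'' everywhere. Concretely, I would fix an internal variable $x_{I,i}$ that is periodic of order $s$ at $t^m$ and show that the \emph{standard} explicit Runge--Kutta update already produces $x_{I,i}^{m+1} = x_{I,i}^{m-p+1}$, which is exactly the value assigned by the signal-flow based periodic method. For every variable that is not periodic of order $s$ the two schemes agree by definition, so establishing this one identity suffices for equivalence.

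The key step is a stagewise induction coupled to the order of periodicity: I would prove that if $x_{I,i}$ is periodic of order $q$, then $k_{I,i}^{m,r} = k_{I,i}^{m-p,r}$ for all $r \le q$. The base case $q=1$ uses $c_1 = 0$, so $k_{I,i}^{m,1} = f_{I,i}(x_E^m, x_I^m)$; since $x_{I,i}$ is in particular semi-periodic together with every variable of $\pre{x_{I,i}}$, the external inputs satisfy $k_{E,j}^{m,q} = k_{E,j}^{m-p,q}$ and the internal inputs satisfy $x_{I,j}^m = x_{I,j}^{m-p}$, and because $f_{I,i}$ depends only on the entries of $\pre{x_{I,i}}$ we get $k_{I,i}^{m,1} = k_{I,i}^{m-p,1}$. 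For the inductive step, periodicity of order $q$ means every input is at least periodic of order $q-1$, so by the induction hypothesis their stages up to $q-1$ agree across the period; combined with the semi-periodicity of the inputs, every argument of $k_{I,i}^{m,q} = f_{I,i}(k_E^{m,q},\, x_I^m + h\sum_{r=1}^{q-1} a_{qr} k_I^{m,r})$ coincides with its $(m-p)$ counterpart, giving $k_{I,i}^{m,q} = k_{I,i}^{m-p,q}$. Taking $q=s$ and summing, $x_{I,i}^{m+1} = x_{I,i}^m + h\sum_{q=1}^s b_q k_{I,i}^{m,q} = x_{I,i}^{m-p} + h\sum_{q=1}^s b_q k_{I,i}^{m-p,q} = x_{I,i}^{m-p+1}$, where the last equality is just the standard step taken from $t^{m-p}$.

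The main obstacle is organizing the two interlocking inductions cleanly: the outer induction runs over the order of periodicity while the inner stage index must always lag one step behind it, and it is the strict lower-triangularity of $A$ (explicitness) that makes stage $q$ of $x_{I,i}$ depend only on stages $1,\dots,q-1$ of the variables in $\pre{x_{I,i}}$, so that the induction hypothesis is exactly strong enough to close the step. I expect the only genuine care to be needed in handling the internal inputs simultaneously on two levels: semi-periodicity supplies $x_{I,j}^m = x_{I,j}^{m-p}$ for the \emph{state} entering each stage, whereas the order-$(q-1)$ hypothesis supplies the matching of the \emph{increments} $k_{I,j}^{m,r}$, and both are required at once; one should also note that the circular buffer is precisely what makes $x_{I,i}^{m-p+1}$ available for the assignment. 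Since the argument is structurally identical to the latency case, with $p=1$ recovering Theorem~\ref{th:ERK=sfERK}, no new estimates or constructions are needed.
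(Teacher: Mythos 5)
Your proposal is correct and follows essentially the same route as the paper's own proof: the paper likewise adapts the argument of Theorem~\ref{th:ERK=sfERK} verbatim with $t^{m-1}$ replaced by $t^{m-p}$, starting from the base case $q=1$ (using $c_1=0$) and inducting on the order of periodicity to get $k_{I,i}^{m,q} = k_{I,i}^{m-p,q}$, then summing the stages to conclude $x_{I,i}^{m+1} = x_{I,i}^{m-p+1}$. Your write-up is in fact slightly more explicit than the paper about the interlocking of the stage index with the periodicity order and about why explicitness of $A$ makes the induction close, but the substance is identical.
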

\begin{proof}
\allowdisplaybreaks
The proof is almost identical to the proof of Theorem~\ref{th:ERK=sfERK}. We add again the superscript $ m $ or $ m-p $ to the stages to differentiate between the time points. Let $ x_{I, i} $ be periodic at $ t^m $, i.e.\ $ x_{I, i}^m = x_{I, i}^{m-p} $ and
\begin{align*}
    f_{E, j}(t^m + c_q h) &= f_{E, j}(t^{m-p} + c_q h) \quad \forall x_{E, j} \in \pre{x_{I, i}}, \\
    x_{I, j}^m &= x_{I, j}^{m-p} \quad \forall x_{I, j} \in \pre{x_{I, i}}.
\end{align*}
For $ q = 1 $, this yields
\begin{equation*}
    k_{I, i}^{m, 1} = f_{I, i}(x_E^m, x_I^m) = f_{I, i}(x_E^{m-p}, x_I^{m-p}) = k_{I, i}^{m-p, 1}
\end{equation*}
and hence by induction
\begin{align*}
    k_{I, i}^{m, q} &= f_{I, i}\big(k_E^{m, q}, x_I^m + h \sum_{r=1}^{q-1} a_{qr} k_I^{m, r}\big) \\
                    &= f_{I, i}\big(k_E^{m-p, q}, x_I^{m-p} + h \sum_{r=1}^{q-1} a_{qr} k_I^{m-p, r}\big)
                     = k_{I, i}^{m-p, q}
\end{align*}
for each variable $ x_{I, i} $ which is periodic of order $ q $. Consequently,
\begin{equation*}
    \begin{split}
        x_{I, i}^{m+1} &= x_{I, i}^m + h \, \Phi_i(t^m, x^m, h) \\
                       &= x_{I, i}^m + h \sum_{q=1}^s b_q k_{I, i}^{m, q} \\
                       &= x_{I, i}^{m-p} + h \sum_{q=1}^s b_q k_{I, i}^{m-p, q} \\
                       &= x_{I, i}^{m-p} + h \, \Phi_i(t^{m-p}, x^{m-p}, h) = x_{I, i}^{m-p+1},
    \end{split}
\end{equation*}
for each $ x_{I, i} $ which is periodic of order $ s $.
\end{proof}

Now, let sfpRK denote the signal-flow based standard fourth-order Runge--Kutta method for periodic systems.

\begin{example} \label{ex:Inverterchain_sfpRK}
To compare the signal-flow based method for periodic systems with the standard Runge--Kutta method, we simulate the inverter chain as described in Example~\ref{ex:Inverterchain_sfRK}. The results are shown in Figure~\ref{fig:Inverterchain_sfpRK} and Table~\ref{tab:FunctionEval_sfpRK}. Here, the number of function evaluations rises with increasing $ \Delta T $ since the time interval in which the system is periodic according to our definition decreases.

\begin{figure}[htbp]
    \begin{center}
        \begin{minipage}[c]{0.45\textwidth}
            \centering
            \subfiguretitle{RK} \vspace*{0.35em}
            \includegraphics[width=\textwidth]{InverterchainTime_RK}
        \end{minipage}
        \begin{minipage}[c]{0.45\textwidth}
            \centering
            \subfiguretitle{sfpRK} \vspace*{0.35em}
            \includegraphics[width=\textwidth]{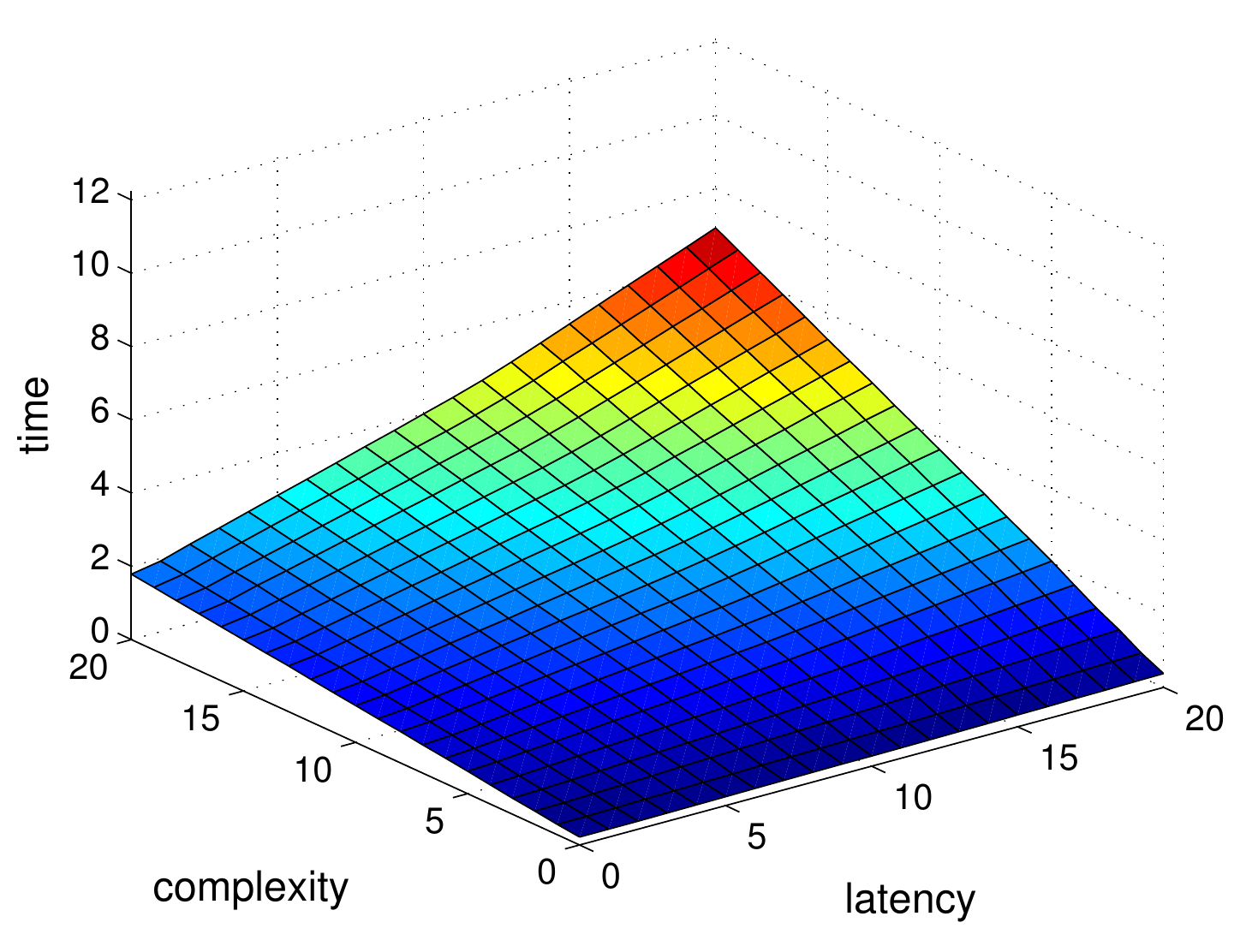}
        \end{minipage}
        \\ \vspace*{3mm}
        \begin{minipage}[c]{0.45\textwidth}
            \centering
            \subfiguretitle{RK vs. sfpRK} \vspace*{0.35em}
            \includegraphics[width=\textwidth]{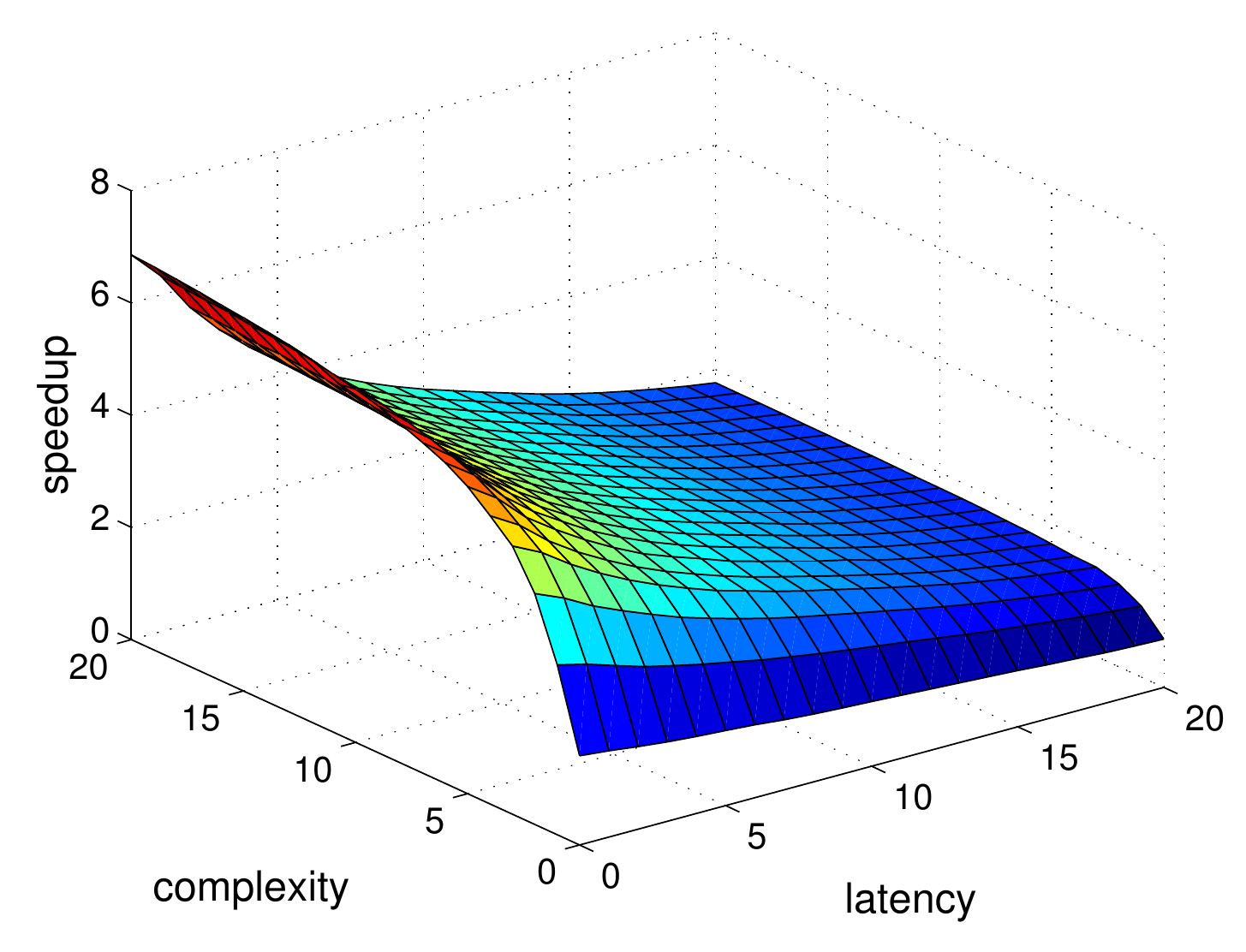}
        \end{minipage}
    \end{center}
    \caption{Influence of the complexity and latency on the runtime of RK and sfpRK.}
    \label{fig:Inverterchain_sfpRK}
\end{figure}

\begin{table}[htb]
    \caption{Number of transistor model evaluations of RK and sfpRK.}
    \newcommand{\mc}[1]{\multicolumn{1}{|c|}{#1}}
    \newcommand{\ts}{\mspace{2mu}}
    \footnotesize
    \centering
    \begin{tabular}{|r*{21}{|r}|}
        \hline
        \mc{$ \Delta T $} & \mc{0} & \mc{5} & \mc{10} & \mc{15} & \mc{20} \\
        \hline
        \hline
              RK  & $ 3\ts200\ts000 $ & $ 3\ts200\ts000 $ & $ 3\ts200\ts000 $ & $ 3\ts200\ts000 $ & $ 3\ts200\ts000 $ \\
            sfpRK & $     422\ts328 $ & $     700\ts936 $ & $     999\ts672 $ & $ 1\ts360\ts800 $ & $ 1\ts760\ts800 $ \\
        \hline
    \end{tabular}
    \label{tab:FunctionEval_sfpRK}
\end{table}
\end{example}

\section{Conclusion}

The efficiency of the signal-flow based Runge--Kutta methods depends strongly on the characteristic properties of the system. The inverter chain example shows that if during the simulation large parts of the system are latent and function evaluations are comparatively time-consuming, then the signal-flow based methods result in a substantially reduced runtime while introducing only a small deviation compared to the corresponding standard Runge--Kutta methods. If, on the other hand, large parts are periodic with a fundamental period $ T $, then the signal-flow based methods for periodic systems can be used to speed up the simulation. The following example summarizes these results.

\begin{example} \label{ex:Inverterchain_sfRK_sfpRK}
Figure~\ref{fig:Inverterchain_sfRK_sfpRK} shows a comparison of the signal-flow based standard Runge--Kutta method and the corresponding method for periodic systems. If $ T $ is small, then the periodicity-oriented Runge--Kutta method is more efficient since the circuit is active most of the time. With increasing $ T $, the latency exploitation becomes more efficient.

\begin{figure}[htbp]
    \begin{center}
        \begin{minipage}[c]{0.45\textwidth}
            \centering
            \subfiguretitle{sfRK vs. sfpRK} \vspace*{0.35em}
            \includegraphics[width=\textwidth]{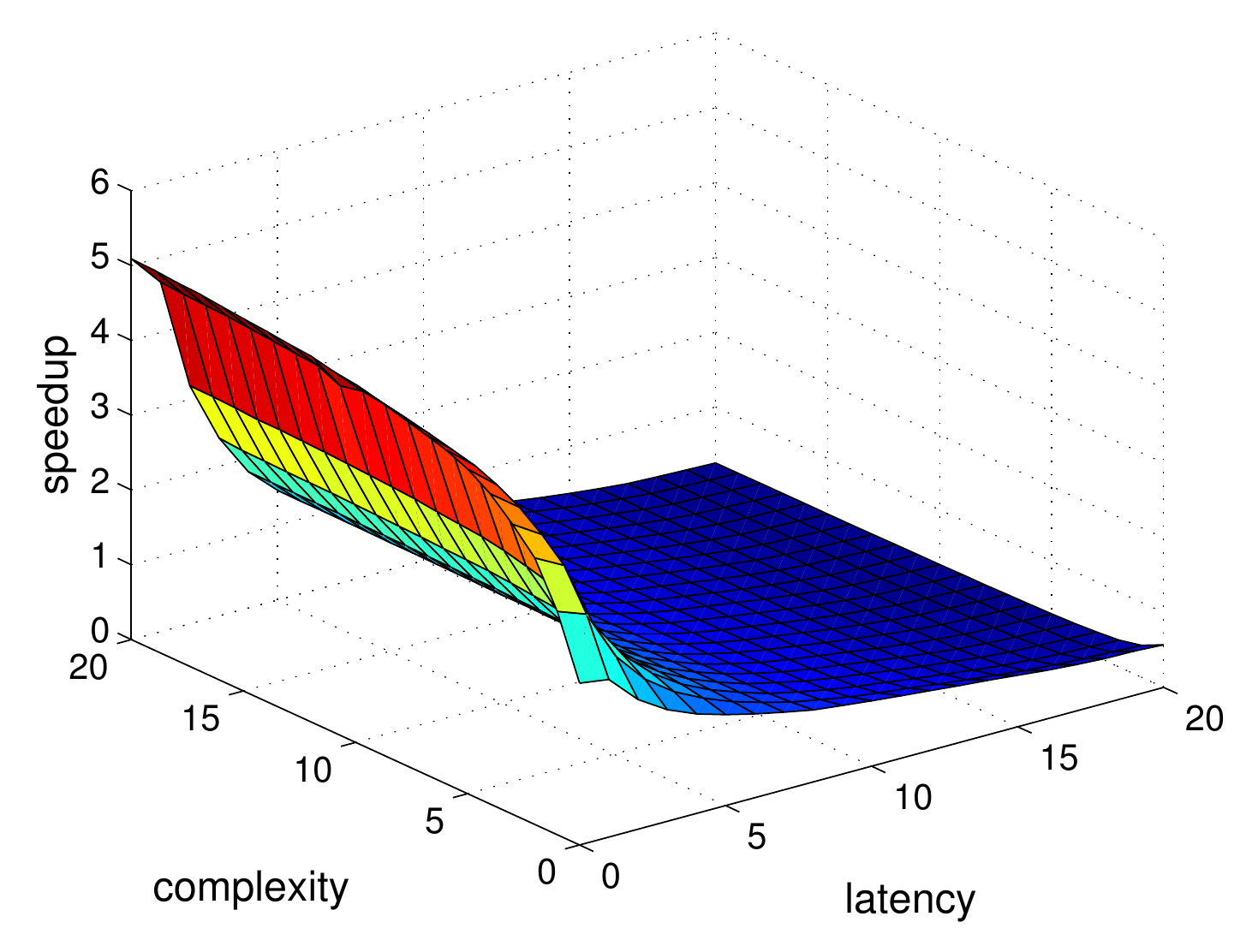}
        \end{minipage}
        \begin{minipage}[c]{0.45\textwidth}
            \centering
            \subfiguretitle{sfpRK vs. sfRK} \vspace*{0.35em}
            \includegraphics[width=\textwidth]{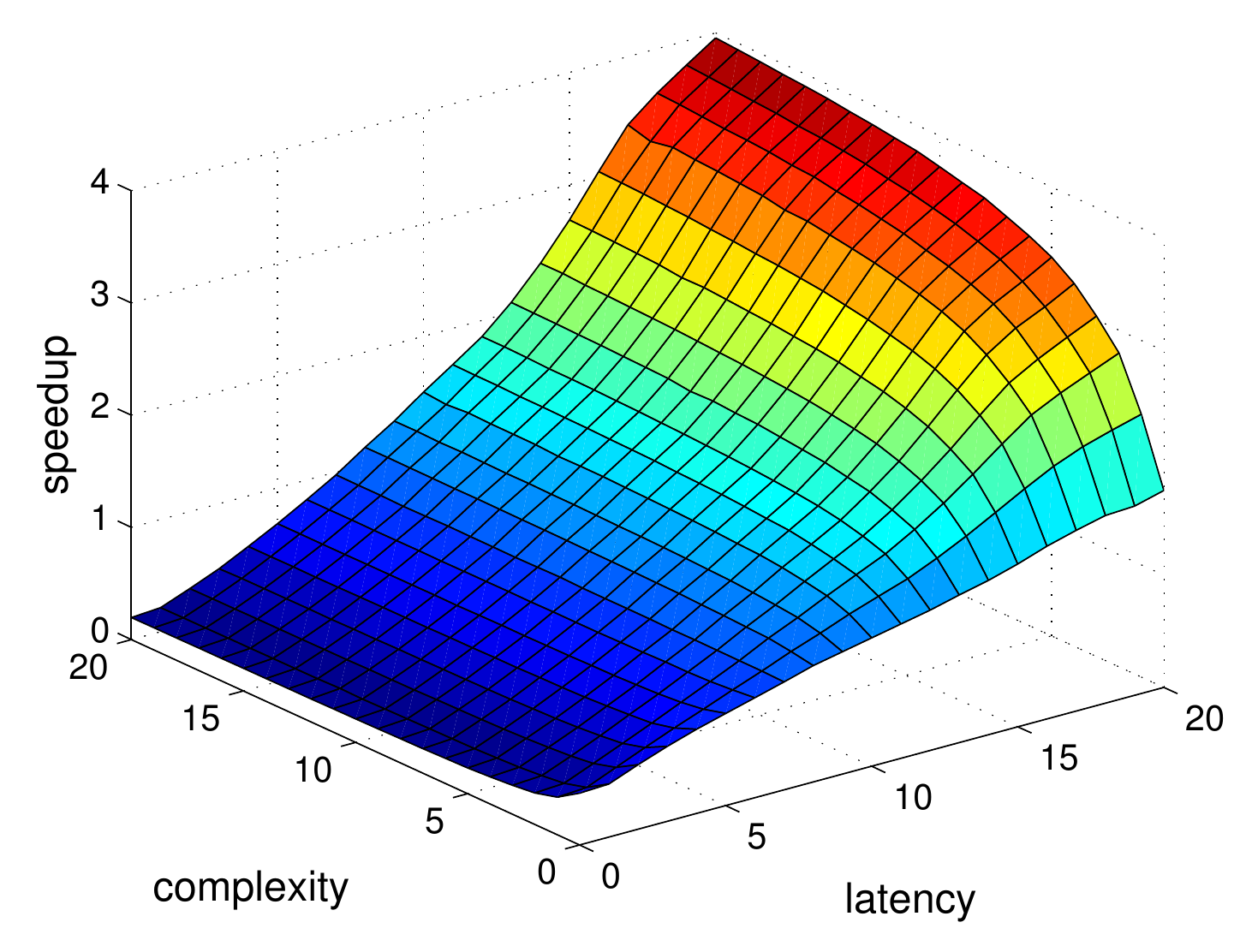}
        \end{minipage}
    \end{center}
    \caption{Comparison of sfRK and sfpRK.}
    \label{fig:Inverterchain_sfRK_sfpRK}
\end{figure}
\end{example}

\section{Further extensions}

To utilize not only the temporal latency, i.e.\ inactivity over a period of time, but also the spatial latency, i.e.\ inactivity during the Newton--Raphson iterations, the proposed techniques might be applicable as well. This could, for example, be used to speed up the DC analysis, exploiting the fact that some parts of the circuit possibly converge rapidly to a solution while other parts converge only very slowly.


\medskip
Received xxxx 20xx; revised xxxx 20xx.
\medskip


\begin{thebibliography}{99}

\bibitem{GPS04}(MR2107649)
M.~Golubitsky, M.~Pivato, and I.~Stewart.
\newblock Interior symmetry and local bifurcation in coupled cell networks.
\newblock {\em Dynamical Systems}, 19:389--407, 2004.

\bibitem{GS03}(MR1891106)
M.~Golubitsky and I.~Stewart.
\newblock {\em The Symmetry Perspective: From Equilibrium to Chaos in Phase Space and Physical Space}.
\newblock Birkhäuser, 2003.

\bibitem{GFtM05}
M.~G{\"u}nther, U.~Feldmann, and J.~ter Maten.
\newblock Modelling and discretization of circuit problems.
\newblock Technical report, OAI Repository of the Technische Universiteit Eindhoven, 2005.

\bibitem{GR94}
M.~G{\"u}nther and P.~Rentrop.
\newblock Partitioning and multirate strategies in latent electric circuits.
\newblock In R.~E. Bank, R.~Burlisch, H.~Gajewski, and K.~Merten, editors,
 {\em Mathematical Modelling and Simulation of Electrical Circuits and
  Semiconductor Devices}, volume 117. Birkhäuser, 1994.

\bibitem{MUR95}
N.~Mohan, T.~M. Undeland, and W.~P. Robbins.
\newblock {\em Power Electronics: Converters, Applications, and Design (Second Edition)}.
\newblock John Wiley \& Sons, 1995.

\bibitem{SH68}
H.~Shichman and D.~A. Hodges.
\newblock Modeling and simulation of insulated-gate field-effect transistor switching circuits.
\newblock {\em IEEE Journal of Solid-State Circuits}, 3(3):285--289, 1968.

\end{thebibliography}
\end{document}